\DeclareRobustCommand*{\mfaktor}[3][]
{
   { \mathpalette{\mfaktor@impl@}{{#1}{#2}{#3}} }
}
\newcommand*{\mfaktor@impl@}[2]{\mfaktor@impl#1#2}
\newcommand*{\mfaktor@impl}[4]{
   \settoheight{\faktor@zaehlerhoehe}{\ensuremath{#1#2{#3}}} 
   \settoheight{\faktor@nennerhoehe}{\ensuremath{#1#2{#4}}} 
      \raisebox{-0.5\faktor@zaehlerhoehe}{\ensuremath{#1#2{#3}}} 
      \mkern-4mu\diagdown\mkern-5mu 
      \raisebox{0.5\faktor@nennerhoehe}{\ensuremath{#1#2{#4}}} 
}
\definecolor{Fg}{RGB}{10,100,85}
\definecolor{Gr}{RGB}{220,240,245}
\def\acts{\curvearrowright}
\newcommand{\N}{\mathbb{N}}
\newcommand{\C}{\mathbb{C}}
\newcommand{\Z}{\mathbb{Z}}
\newcommand{\R}{\mathbb{R}}
\newcommand{\Pp}{\mathbb{P}}
\newcommand{\Hh}{\mathbb{H}}
\newcommand{\K}{\mathbb{K}}
\newcommand{\id}{\mathbb{1}}
\newcommand{\slr}{\mathfrak{sl}_4\R}
\newcommand{\Slr}{\mathrm{SL_4\R}}
\newcommand{\Pslr}{\mathrm{PSL_4\R}}
\newcommand{\Stab}{\mathrm{Stab}}
\newcommand{\dev}{\mathrm{dev}}
\newcommand{\Ad}{\mathrm{Ad}}
\newcommand{\Arh}{\mathrm{Ad}\rhp}
\newcommand{\hol}{\mathrm{hol}}
\newcommand{\Rep}{\mathrm{Rep}}
\newtheorem{thm}{Theorem}[section]
\newtheorem{lmm}[thm]{Lemma}
\newtheorem{prp}[thm]{Proposition}
\newtheorem{cor}[thm]{Corollary}
\newtheorem{de}[thm]{Definition}
\newtheorem{ex}[thm]{Example}
\newtheorem{rmk}[thm]{Remark}
\newcommand{\Orb}{\mathcal{O}}
\newcommand{\rhp}{\rho_{\mathrm{hyp}}}
\newcommand{\chp}{\chi_{\mathrm{hyp}}}
\author{Alejandro García and Joan Porti\footnote{Both authors partially supported by
FEDER-AEI (grant numbers PID2021-125625NB-100 and
Mar\'\i a de Maeztu Program CEX2020-001084-M) The second author is supported by the doctoral grant PRE2022-105108}}
\date\today
\title{Projective deformations of hyperbolic 3-orbifolds with turnover ends}
\begin{document}
\maketitle
\begin{abstract}
We study projective deformations of (topologically finite) hyperbolic 3-orbifolds whose ends have turnover cross section. These deformations are examples of \textit{projective cusp openings}, meaning that hyperbolic cusps are deformed in the projective setting such that they become totally geodesic generalized cusps with diagonal holonomy. We find that this kind of structure is the only one that can arise when deforming hyperbolic turnover cusps, and that turnover funnels remain totally geodesic. Therefore, we argue that,  under no infinitesimal rigidity assumptions, the deformed projective 3-orbifold remains properly convex. Additionally, we give a complete description of the character variety $X(\pi_1S^2(3,3,3),\Slr)$. 
\end{abstract}
\section*{Introduction}\label{0}
\setcounter{section}{0}
Hyperbolic structures on finite volume 3-orbifolds are known to be rigid, both infinitesimally (Weil rigidity) and globally (Mostow-Prasad rigidity). Then, the hierarchy of geometries given by the inclusion $\mathrm{Aut}(\Hh^3)<\mathrm{Aut}(\Pp^3)$ yields the natural question of whether finite volume hyperbolic 3-orbifolds are projectively rigid. There is not a generic answer, since a variety of flexible and rigid examples is known (see, e.g., the work of Cooper, Long, and Thistlethwaite in \cite{MR2264468} for closed 3-manifolds). 
\par In the setting of flexible noncompact manifolds or orbifolds, an interesting phenomenon arises: when deforming a hyperbolic cusp (whose holonomy preserves a foliation by horospheres), the deformed cusp sometimes becomes \textit{totally geodesic} (so the deformed holonomy preserves a projective plane). Note that by deformation we always mean \textit{small/local deformation} (for the global notion, we use \textit{path of deformations}). Ballas, Danciger, and Lee obtained in \cite{MR3780442} that, provided some infinitesimal rigidity premise, any oriented finite volume hyperbolic 3-manifold admits a properly convex projective structure close to the hyperbolic one where each cusp becomes totally geodesic. 
\par In this article, we examine a class of hyperbolic 3-orbifolds. Namely, we require any end to have turnover cross section. Recall that a turnover is the double of a triangle $T(n_1,n_2,n_3)\subset P_k$ of angles $\frac{2\pi}{n_1},\frac{2\pi}{n_2},\frac{2\pi}{n_3}$ lying in a plane of constant curvature $k\in\{\pm1,0\}$. We follow the notation $S^2(n_1,n_2,n_3)$ for the double of $T(n_1,n_2,n_3)$ with the aim of remarking that it is diffeomorphic to a sphere with three singular points of order $n_1,n_2,n_3$. Turnover ends are products $[0,\infty)\times S^2(n_1,n_2,n_3)$ where $\sum_i n_i=1$ (so the turnover is Euclidean and the end is a cusp) or $\sum_i n_i<1$ (so the turnover is hyperbolic and the end is a funnel).
\par The class of hyperbolic orbifolds whose ends are of turnover type is interesting even if we drop the finite-volume condition, since hyperbolic turnovers are rigid in the hyperbolic space $\Hh^3$. Thus, this class consists of hyperbolically rigid orbifolds, and we consider the problem of projective deformations of them. Under no infinitesimal rigidity assumptions, our principal result is:
\begin{thm}\label{establ}
    Let $\Orb^3$ be a topologically finite hyperbolic 3-orbifold whose ends have turnover cross section. Then, any path of projective deformations of $\Orb^3$ produces a properly convex projective 3-orbifold whose ends are either hyperbolic cusps or totally geodesic.
\end{thm}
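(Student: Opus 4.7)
The plan is to analyze the deformation end-by-end via the character varieties of the turnover groups, and then assemble the local conclusions into a global proper-convexity statement.

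First I would treat each end $E \cong [0,\infty)\times S^2(n_1,n_2,n_3)$ separately. Restricting a path of projective deformations $\rho_t$ of the hyperbolic holonomy $\rhp$ to the end subgroup gives a path $\rho_t|_{\pi_1 E}:\pi_1 S^2(n_1,n_2,n_3)\to\Slr$ issuing from $\rhp|_{\pi_1 E}$. For a funnel end, where $\sum 1/n_i<1$, the turnover is hyperbolic and its fundamental group is a rigid Fuchsian group; a direct computation should show that, near the hyperbolic character, the slice of $X(\pi_1 S^2(n_1,n_2,n_3),\Slr)$ consists of characters preserving a projective plane, forcing the deformed funnel to remain totally geodesic. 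For a cusp end, where $\sum 1/n_i=1$, the cross-section is one of $S^2(3,3,3)$, $S^2(2,4,4)$, $S^2(2,3,6)$. Here I would invoke the explicit description of $X(\pi_1 S^2(3,3,3),\Slr)$ announced in the abstract, together with analogous computations for the two other Euclidean turnovers, to conclude that every character near the hyperbolic one is either a hyperbolic cusp character or the character of a totally geodesic generalized cusp with diagonal holonomy.

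Second I would upgrade this character-level classification to a geometric statement about the end itself. For this one uses that the germ of a projective end is determined by its holonomy together with a finite amount of invariant geometric data (a horoball foliation in the cusp case, an invariant projective plane in the totally geodesic case). A continuity argument along the path starting at the hyperbolic structure then shows that the only end models actually realized are the hyperbolic cusp, the totally geodesic generalized cusp with diagonal holonomy, and the totally geodesic funnel.

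Finally I would establish global proper convexity. Decomposing $\Orb^3$ into a compact core $K$ together with end neighborhoods, proper convexity of the developing image on $K$ is an open condition around the hyperbolic one and hence persists for small deformations. Each of the three end models above is a known properly convex piece, and one checks that the developing image of $K$ glues convexly to the chosen end model at every end. The main obstacle lies in this last step: controlling the global developing map along the entire deformation path to guarantee that it remains a homeomorphism onto a properly convex domain, which in particular requires ruling out any jump phenomenon where the end type might escape the list enumerated in the first step.
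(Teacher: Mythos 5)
Your plan reproduces the broad outline of the paper's argument (study the end holonomies via turnover character varieties, then pass to a global convexity statement), but the crucial final step is left as an acknowledged ``main obstacle,'' and that is exactly where the paper does the real work. Openness of proper convexity near the hyperbolic point, which is what your compact-core argument supplies, is not sufficient for a \emph{path}: a priori the path could leave the open set where your local analysis is valid. The paper closes this gap in two ways that your proposal does not provide. First, it establishes a \emph{global} description of $X(\Gamma,\Slr)$ for $\Gamma=\pi_1 S^2(3,3,3)$ (Theorem~\ref{charvar}, Proposition~\ref{4.9}, Corollary~\ref{generalization}): the component of the character variety containing the hyperbolic character $\chp$ is $S_1\cong\R^2$, every representation over $S_1$ other than $\rhp$ restricts to a diagonalizable $\Gamma_0$-representation, and no character outside $X_0$ is faithful. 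That rules out the ``jump phenomenon'' you flag, and it is not recoverable from the local slice alone. Second, the paper invokes the theorem of Cooper--Tillmann \cite{cooper2020spaceproperlyconvexstructures} that, among finite-volume convex projective structures whose ends are generalized cusps, the properly convex ones form a set that is simultaneously \emph{open and closed}; since all relevant holonomies are shown to be geometric, the set of properly convex structures is a whole connected component and the path stays in it. Your argument has nothing playing the role of closedness.

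A few smaller issues. For the funnel ends, you propose a local slice computation near the hyperbolic character, but the paper needs (and uses) a genuinely global fact: Porti's result \cite{porti} that \emph{every} $\Slr$-holonomy of a hyperbolic turnover is conjugate to $\tilde\rho\oplus 1$, so funnels are totally geodesic along the entire path, not just infinitesimally. For the Euclidean turnovers $S^2(2,2,4)$ and $S^2(2,3,6)$, you suggest ``analogous computations'' to $S^2(3,3,3)$; the paper's Lemma~\ref{ultimolem} shows by a cohomology count that these two are projectively \emph{rigid} ($H^1=0$), so there is nothing to classify and only $S^2(3,3,3)$ is flexible. Finally, the paper's proof of proper convexity for the local version (Theorem~\ref{thm}) uses the Cooper--Long--Tillmann holonomy principle after \emph{doubling} $\Orb^3$ along its totally geodesic funnels, to land in the finite-volume generalized-cusp setting where that principle applies; your sketch of ``checking that the developing image of $K$ glues convexly to the end models'' does not substitute for this reduction and would need to be made precise.
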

\par Notice that this theorem has a global flavor and, in particular, implies a local version (closer to the one in \cite{MR3780442} we mentioned above). Moreover, it provides examples of realization of the claim by Ballas-Cooper-Leitner in \cite{MR4125754} that \textit{generalized cusps appear by projective deformations of hyperbolic cusp}, and also a global converse of such a statement for orbifolds as in Theorem \ref{establ}.
\par The proof of Theorem \ref{establ} is more delicate for the cusped case, and is based on the study of representations of $\pi_1\left(S^2(3,3,3)\right)$ in $\Slr$, because the unique Euclidean turnover that is not projectively rigid is $S^2(3,3,3)$ (see Lemma \ref{ultimolem}). We distinguish two principal ingredients from that proof:
\par First, we adapt a partial slice in $\Rep(\pi_1T^2,\Slr)$ around the hyperbolic holonomy $\rhp$, built by Ballas-Danciger-Lee in \cite{MR3780442} to study projective deformations of the torus cusp. This partial slice becomes a slice with no restrictions in our case.
\begin{thm}\label{main1}
There is a slice $\mathcal{S}_{\rhp}\subset\Rep\left(\pi_1\left(S^2(3,3,3)\right),\Slr\right)$ at $\rhp$ that represents all (conjugacy classes of) projective holonomies near $\rhp$. Moreover, the restriction to $\pi_1(T^2)$ of such holonomies is either diagonalizable or induced by a hyperbolic cusp.
\end{thm}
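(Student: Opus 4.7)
The plan is to exploit the presentation
\[
\pi_1\bigl(S^2(3,3,3)\bigr) = \langle a, b, c \mid a^3 = b^3 = c^3 = abc = 1 \rangle,
\]
together with the fact that non-trivial order-three elements of $\Slr$ belong to only three closed conjugacy classes (distinguished by the multiplicities of the eigenvalues $1$, $\omega$, $\bar\omega$). This rigidity of the generators is what should promote the partial slice of \cite{MR3780442} into a full one in the present setting.

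The first step is to build $\Ss_{\rhp}$ explicitly. For any $\rho$ near $\rhp$ the images $\rho(a), \rho(b), \rho(c)$ remain in the conjugacy classes $\mathcal{C}_a, \mathcal{C}_b, \mathcal{C}_c$ containing $\rhp(a), \rhp(b), \rhp(c)$. After a global conjugation I fix $\rho(a) = \rhp(a) =: A_0$. The remaining data is $\rho(b) \in \mathcal{C}_b$ subject to $(A_0\,\rho(b))^3 = 1$, so that $\rho(c) = (A_0\,\rho(b))^{-1}$ automatically belongs to $\mathcal{C}_c$ by continuity; this is taken modulo the residual gauge group $Z_{\Slr}(A_0)$. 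The slice $\Ss_{\rhp}$ will then be a local transversal, inside the algebraic set $\{B \in \mathcal{C}_b : (A_0 B)^3 = 1\}$, to the orbits of $Z_{\Slr}(A_0)$. A dimension count against $H^1\bigl(\pi_1(S^2(3,3,3)); \arh\bigr)$ should confirm that this transversal exhausts every nearby conjugacy class, so $\Ss_{\rhp}$ is a full slice rather than a partial one.

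To analyze the restriction to $\pi_1(T^2)$, I use that this subgroup sits as the index-three translation kernel of $\pi_1(S^2(3,3,3))$, with $\Z/3$-quotient acting by conjugation by $\rho(a)$. Ballas--Danciger--Lee's partial slice for $\Rep(\pi_1(T^2), \Slr)$ at the hyperbolic cusp describes all commuting pairs near $\rhp|_{\pi_1(T^2)}$ in a block-triangular normal form. Any restriction coming from $\Ss_{\rhp}$ must be invariant under conjugation by $A_0$, a rotation of order three on the horospherical plane. Intersecting this $\Z/3$-fixed locus with the BDL normal form should leave exactly two strata: one where both generators are simultaneously diagonalizable, and one where they remain unipotent, which is the hyperbolic cusp case.

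The hardest step, I expect, will be this last intersection: verifying explicitly that no intermediate Jordan type for the commuting pair survives the $\Z/3$-symmetry. This reduces to a linear-algebraic question on the two-dimensional cuspidal plane, namely that any commuting pair of $\Slr$ elements fixed under conjugation by a primitive cube-root-of-unity rotation is either simultaneously diagonalizable or simultaneously unipotent. Once that dichotomy is established, the theorem follows directly from the slice description.
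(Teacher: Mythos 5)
Your outline shares the structural skeleton of the paper's proof of Theorem \ref{main}: fix $\rho(a)=\rhp(a)$, use the $\Z/3$-symmetry, and match against $\dim H^1(\Gamma,\Arh)=2$ from Proposition \ref{dimslice}. The paper, however, is entirely explicit: it simply writes down the two-parameter family $\Psi_{(u,v)}$, with $\Psi_{(u,v)}(a)=\rhp(a)$ frozen and $\Psi_{(u,v)}(a^2b)$ an explicit exponential lifted from the BDL parametrization, and then verifies (i) the relations hold via BDL's eigenvector description, (ii) the tangent cocycles $d_1,d_2$ are not inner, and (iii) $\Ad:\Slr\times\mathcal{S}_{\rhp}\to\Rep(\Gamma,\Slr)$ is a submersion at $(\id,\rhp)$. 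Your observation that order-three elements of $\Slr$ sit in only finitely many (closed) conjugacy classes, so that $\rho(a),\rho(b),\rho(c)$ can be pinned to $\mathcal{C}_a,\mathcal{C}_b,\mathcal{C}_c$, is a clean way of justifying the freezing of $\rho(a)$; the paper does this implicitly but never says it in those terms.

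The genuine gap is in your dichotomy step, for two reasons. First, the linear-algebra claim at the end is mis-stated: $\Ad A_0$ does \emph{not} fix the commuting pair — it cyclically permutes $\{\rho(a^2b),\ \rho(ba^2),\ \rho(a^2b\cdot ba^2)^{-1}\}$, so the group $\rho(\Gamma_0)$ is invariant but no nontrivial element of it is pointwise fixed. As literally written the hypothesis is vacuous near $\rhp$, since the only elements of $C_0$ fixed by $\Ad A_0$ (a nontrivial rotation of the $(x,y)$-plane) are trivial. Second, you treat the BDL \emph{partial} slice as describing \emph{all} commuting pairs near $\rhp|_{\Gamma_0}$; it does not, and that is precisely why BDL call it partial — it omits, for instance, the $C_1$ and $C_2$ generalized-cusp types. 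The paper sidesteps having to classify all $\Z/3$-equivariant abelian deformations: it builds $\mathcal{S}_{\rhp}$ \emph{inside} the BDL family from the outset, so the diagonalizability of $\Psi_{(u,v)}(a^2b)$ for $(u,v)\ne(0,0)$ is inherited directly from \cite{MR3780442}. To complete your version you would need either to reproduce that explicit embedding into BDL's normal form, or to prove honestly that any small $\Z/3$-invariant abelian deformation of $\rhp(\Gamma_0)$ is diagonalizable or parabolic — and that is exactly the substantive content your proposal defers rather than supplies.
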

\par Secondly, we obtain a global description of the character variety $X(\pi_1\left(S^2(3,3,3)\right),\Slr)$. Recall that real character varieties are semialgebraic subsets (as ensured by real GIT; see \cite{MR1087217}). We describe $X(\pi_1\left(S^2(3,3,3)\right),\Slr)$ both algebraically and topologically as follows.
\begin{thm}\label{mzcla}
        The character variety $X(\pi_1\left(S^2(3,3,3)\right),\Slr)$ is not only real semialgebraic but a real algebraic set. It has a unique irreducible component $X_0\subset X(\pi_1\left(S^2(3,3,3)\right),\Slr)$ of positive dimension, which is a hypersurface in $\R^3$ given implicitly by
        \begin{equation*}
              \tau^2-(rs-3)\tau+r^3+s^3-6rs+9=0,
        \end{equation*}
        where the coordinates $r,s,\tau$ are traces of matrices in $\rho\left(\pi_1(S^2(3,3,3)\right)$.
       \par Topologically, $X_0=S_1\sqcup S_2$ splits into two disjoint components
     \begin{align*}
         &S_1=X_0\cap \{r\geq 3,s\geq 3\},\\
         &S_2=X_0\setminus S_1
     \end{align*}
     homeomorphic to $\R^2$. Moreover, the projection $$\pi:\Rep(\pi_1\left(S^2(3,3,3)\right),\Slr)\to X(\pi_1\left(S^2(3,3,3)\right),\Slr)$$ restricted to the slice $\mathcal{S}_{\rhp}$ is a surjection onto $S_1$ which is 3-to-1 generically and 1-to-1 at the hyperbolic holonomy $\rhp$. Thus, $S_1$ has a natural orbifold structure with a sole branching point $\chp$ of order three. Further, $S_2$ is smooth.
    \par On the other hand, the complement of $X_0$ consists of $19$ isolated points with no geometric realization, i.e. representations at their fibers are not faithful.
\end{thm}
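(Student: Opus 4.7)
The group $\pi_1(S^2(3,3,3))$ has the presentation $\langle a, b \mid a^3 = b^3 = (ab)^3 = 1\rangle$ after eliminating $c = (ab)^{-1}$, so $\Hom(\pi_1(S^2(3,3,3)), \Slr)$ is the variety of pairs $(A,B) \in \Slr^2$ satisfying $A^3 = B^3 = (AB)^3 = I$. Each such matrix is diagonalizable over $\C$ with eigenvalues among the cube roots of unity; together with $\det = 1$ and the constraint that complex eigenvalues come in conjugate pairs, this forces $\tra A, \tra B, \tra(AB)$ to lie in $\{-2, 1, 4\}$, so $\Rep$ is stratified into finitely many closed pieces indexed by these discrete trace triples. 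I would focus on the principal stratum, where $\tra A = \tra B = \tra(AB) = 1$ (matching the hyperbolic picture of three order-$3$ elliptic isometries of $\Hh^3$ whose axes meet at a common ideal point, giving the relevant conjugacy class inside $\mathrm{SO}^+(3,1) \subset \Slr$), and argue that each remaining stratum contributes only finitely many characters, accounting in total for the $19$ isolated points.

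\textbf{Deriving the defining polynomial.} On the principal stratum I would normalize $A$ by $\Slr$-conjugation to a fixed block-diagonal form and parametrize $B$ by its matrix entries modulo the centralizer of $A$. Since the traces of $A$, $B$, and $AB$ are pinned at $1$, the continuous coordinates $r, s, \tau$ must come from longer words or from adjoint traces; the cubic coefficients $r^3 + s^3$ in the stated relation suggest degree-three invariants such as $\tra(ABA^{-1}B^{-1})$, $\tra(ABAB^{-1})$, or traces of $\Ad(A), \Ad(B), \Ad(AB)$ on $\slr$. I would select three such trace functions as coordinates, compute them as polynomials in the free parameters of $B$, impose $(AB)^3 = I$, and eliminate auxiliary variables via resultants or a Gr\"obner basis to isolate the relation $\tau^2 - (rs-3)\tau + r^3 + s^3 - 6rs + 9 = 0$. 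A sanity check: $(r,s,\tau) = (3,3,3)$ satisfies this equation (since $9 - 18 + 9 = 0$) and lies on the discriminant locus $\Delta(r,s) = (rs-3)^2 - 4(r^3+s^3-6rs+9) = 0$, consistent with $\chp$ being a branch point. Showing that no additional polynomial or real inequality is needed to cut out $X_0$ would then upgrade the semialgebraic description to an algebraic one.

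\textbf{Topology, isolated points, and the slice.} Viewing the equation as a quadratic in $\tau$, the projection $X_0 \to \R^2$ to the $(r,s)$-plane is a double cover branched over $\{\Delta = 0\}$. I would sketch the planar region $\{\Delta \geq 0\}$, verify that it splits into the two components $\{r \geq 3, s \geq 3\}$ and its complement, and check that the branched double cover over each is homeomorphic to $\R^2$, yielding $S_1$ and $S_2$ respectively; smoothness of $S_2$ follows from $\nabla f \neq 0$ there. The $19$ isolated characters would be enumerated by classifying, up to $\Slr$-conjugacy, representations whose image is a finite subgroup generated by three order-$3$ elements with product the identity, and checking which of these characters lie outside $X_0$. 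Finally, the generic $3$-to-$1$ property of $\mathcal{S}_{\rhp} \to S_1$ should follow from the $\Z/3$-outer-automorphism of $\pi_1(S^2(3,3,3))$ cyclically permuting $(a,b,c)$: this preserves characters but acts non-trivially on representations within the slice (the construction of $\mathcal{S}_{\rhp}$ in Theorem \ref{main1} breaks the cyclic symmetry), so three slice representations give one character generically, with $\chp$ as the unique fixed point and hence the sole orbifold branch point of $S_1$ of order $3$. The main obstacle I foresee is the elimination step in Step 2: identifying trace coordinates that produce the clean coefficients $r^3+s^3-6rs+9$ rather than a messier polynomial requiring a further change of variables will likely take several attempts, and verifying the quadratic-in-$\tau$ structure is intrinsic (not an artifact of the normalization) is the algebraic crux.
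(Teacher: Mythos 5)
Your proposal has the right general shape (stratify by the conjugacy class of the generators' images, isolate a positive-dimensional stratum plus finitely many isolated characters, use a branched double cover for the topology), but it misses the structural lemma that makes the computation tractable and that the paper's argument revolves around.

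The paper does not attempt to eliminate directly inside $\Slr$. Instead, Proposition \ref{nonsimple} shows, via a twisted Euler characteristic and Poincar\'e duality computation applied to the $\Gamma$-modules $\R^4$ and $\Lambda^2\R^4$, that \emph{every} semisimple representation $\Gamma\to\Slr$ preserves either a line (when $\rho(a)$ is trivial or conjugate to $P_3\oplus 1$) or a $2$-plane (when $\rho(a)$ is conjugate to $R\oplus R$). Combined with Lemmas \ref{lmm1} and \ref{lemasln}, this reduces the problem to $X(\Gamma,\mathrm{SL_3\R})$ together with a finite set coming from $\mathrm{SL_2\R}\times\mathrm{SL_2\R}$. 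In the $\mathrm{SL}_3$ picture the defining equation drops out of Lawton's explicit description of $X(F_2,\mathrm{SL_3\C})$: the relations $a^3=b^3=(ab)^3=1$ force the six trace coordinates $\chi(a^{\pm1}),\chi(b^{\pm1}),\chi((ab)^{\pm1})$ to vanish (each being $1+\omega+\omega^2=0$ for a nontrivial order-$3$ element of $\mathrm{SL_3}$), and what survives of Lawton's hypersurface in the remaining coordinates $r=\chi(ab^{-1})$, $s=\chi(a^{-1}b)$, $\tau=\chi(aba^{-1}b^{-1})$ is exactly $\tau^2-(rs-3)\tau+r^3+s^3-6rs+9=0$. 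Reality of the component is then handled by Acosta's theorem on real forms. Your speculation that $r,s,\tau$ might be adjoint traces or traces of longer $\Slr$ words is off the mark: they are ordinary $\mathrm{SL}_3$ traces, and only $\tau=\tra[a,b]$ of the candidates you list is the right kind of object. Without the reduction to $\mathrm{SL}_3$, the elimination you flag as the ``algebraic crux'' has no clean resolution, so the proposal as written stalls precisely where you predict it might.

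Two smaller points. First, the stratification by $(\tra A,\tra B,\tra(AB))\in\{-2,1,4\}^3$ is not the stratification the paper uses (it stratifies by the conjugacy class of $\rho(a)$ alone) and in particular your assertion that all non-principal strata are finite is unproved; the paper instead derives the $15+4$ isolated characters from the two reductions. Second, the $3$-to-$1$ statement for $\mathcal{S}_{\rhp}$ is proved in Corollary \ref{generalization} by exhibiting the model surface $\tilde X_0$ of diagonalized $\mathrm{SL}_3\R$ representations $\rho_{(x_1,x_2,x_3)}$ and identifying $X_0\cong\tilde X_0/\Sigma_3$; the cyclic permutation of eigenvalue labels (not an outer automorphism of $\Gamma$ acting on the slice) is what produces the generic three-fold ambiguity, with $(1,1,1)$ as the unique fixed point. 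Your sanity checks — $(3,3,3)$ solving the equation and lying on the discriminant locus — are correct and a nice confirmation, and the branched-double-cover viewpoint for $S_1\sqcup S_2$ is a reasonable alternative to the quotient description.
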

\paragraph{Organization of this paper} We devote Section \ref{1} to state briefly some preliminaries we need. In Section \ref{2}, we concentrate on algebraic deformations of the end $[0,\infty)\times S^2(3,3,3)$, which is the only flexible turnover cusp. Theorem \ref{main1} (which will appear as Theorem \ref{main}) is stated and proved throughout that section. We apply this algebraic understanding to the geometric setting in
Section \ref{3}; Theorem \ref{thm}, a local version of Theorem \ref{establ}, is the main result there. Finally, in Section \ref{4}, we face to the generic problem of projective structures on $[0,\infty)\times S^2(3,3,3)$. Then, we get Theorem \ref{mzcla} (see Theorem \ref{charvar} and Remark \ref{generalization}). As a consequence, we are able to prove our main result (Theorem \ref{establ}), which appears as Corollary \ref{generalization}. 
\section{Preliminaries}\label{1}
In this section, we recall general notions that we shall use extensively. Everything along these preliminaries is standard, so there are many good references explaining the details. We mention some of them below. For geometric structures, we refer to Goldman's book \cite{MR4500072}, since there are no relevant differences between the theory for manifolds and good orbifolds. The orbifold generalization was done by Choi in \cite{MR2043960}. In addition, we refer to the paper by Sikora \cite{MR2931326} for the theory of $\mathrm{SL}_n\C$ representation and character varieties. 
\par All the orbifolds we deal with along this article are good orbifolds, i.e. they are quotients of manifolds by discrete groups. Even if no specific preliminaries about orbifolds are needed, the definition and classification of turnovers is assumed. From the geometric side, we are interested in $(G,X)$-structures, which locally model such orbifolds in some homogeneous space $X$ with transition functions in $G$ (a topological group acting on $X$ by diffeomorphisms). By gluing the charts, we get a \textbf{developing map} $\dev:\tilde{\Orb}\to X$ from the universal cover to the homogeneous space. Given a $(G,X)$-structure on $\Orb$ with developing map $\dev$, a representation $\rho:\pi_1^{orb}(\Orb)\to G$ of the orbifold fundamental group in $G$ is called the \textbf{holonomy} if $\dev$ is $\rho$-equivariant. We say that $(\dev,\rho)$ is a \textbf{developing pair}, and it can be proved that such pairs are equivalent to the data of a geometric structure for $\Orb$. 
\par We focus on hyperbolic and projective geometries on 3-orbifolds, i.e. $(\mathrm{SO(3,1)},\mathbb{H}^3)$ and $(\mathrm{PSL_4\R},\R \mathbb{P}^3)$ structures, respectively. Since there is a projective model of the hyperbolic space $\Hh ^n$, any hyperbolic orbifold can be realized as a projective one. Moreover, hyperbolic orbifolds are included in a subclass of projective orbifolds, namely they have a \textbf{properly convex projective structure}. Such a subclass consists of quotients $\Omega/\Gamma$, where $\Omega\subset \R^n\subset \R\mathbb{P}^n$ is a convex bounded domain in some affine chart, and $\Gamma<\mathrm{PSL_{n+1}\R}$ is a discrete subgroup. In terms of the developing pair, this means that the developing map has convex image
\begin{equation*}
    \dev(\Orb)\cong\Omega
\end{equation*}
and $$\rho\left(\pi_1^{orb}(\Orb)\right)=\Gamma<G$$
is discrete.
\par The map assigning to each structure its holonomy representation modulo conjugation in $G$ is called the \textbf{holonomy map}, and denoted by $\hol$. If $\Orb$ is a compact orbifold, then $\hol$ is locally surjective, so we can locally deform our structure by deforming its holonomy. In the noncompact case, this is not so clear. However, a similar holonomy principle due to Cooper, Long and Tillmann \cite{MR3336086} (see Ballas-Danciger-Lee \cite{MR3780442} for a precise statement) holds for deformations of generalized cusps.
\par Now, we state some general facts about representation varieties. In what follows, we assume $G$ is a complex (or real) semisimple algebraic Lie group. Consider the set of homomorphisms $\mathrm{Hom}(\pi_1\Orb,G)$, which contains the holonomy representation of any $(G,X)$-structure on $\Orb$. Topologize $\mathrm{Hom}(\pi_1\Orb,G)$ with the subspace topology of the set of continuous functions (with respect to the compact-open topology) $\pi_1\Orb\to G$; the resulting topological space is the \textbf{representation variety} $\Rep(\pi_1\Orb,G)$. If $G$ is complex and $\pi_1\Orb$ is finitely generated, $\Rep(\pi_1\Orb,G)$ is an algebraic subset of $G^n$, where $n$ is the cardinal of some generating set of $\pi_1\Orb$. Moreover, if $G_{\R}$ is the subgroup of real points of a complex algebraic Lie group $G$, $\Rep(\pi_1\Orb,G_{\R})$ is the subset of real points $\Rep(\pi_1\Orb,G_\C)$, and it is an algebraic subset of $G_{\R}^n$.
\par We said that $\hol$ sends a geometric structure to its holonomy up to conjugation. Thus, the codomain of $\hol$ is the \textbf{character variety}
\begin{equation*}
    X(\pi_1\Orb,G):=\Rep(\pi_1\Orb,G)/\!/G,
\end{equation*}
where $/\!/$ means the GIT quotient if $G$ is complex or the real GIT quotient as developed in Richardson-Slodowy \cite{MR1087217} (see also Böhm-Lafuente \cite{MR4420784}) if $G$ is real. Topologically, $X(\pi_1\Orb,G)$ is the space of the closure of the orbits by conjugation rather than simply the space of such orbits. Richardson-Slodowy \cite{MR1087217} showed that the closure of any orbit contains exactly a closed orbit corresponding to a semisimple representation, so there is an equivalent description of the character variety given by
$$X(\pi_1\Orb,G)=\Rep^{SS}(\pi_1\Orb,G)/G,$$
where $\Rep^{SS}(\pi_1\Orb,G)$ stands for semisimple $\pi_1\Orb\to G$ representations. As a consequence of (real or complex) GIT, the character variety is a Hausdorff space (however, $\Rep(\pi_1\Orb,G)/G$ is not). Moreover, the crucial theorem in \cite{MR1087217} states that $X(\pi_1\Orb,G)$ is a semialgebraic set if $G$ is real (recall that $X(\pi_1\Orb,G)$ is an algebraic set if $G$ is complex).
\section{Projective deformations of $[0,\infty)\times S^2(3,3,3)$}\label{2}
 Consider a hyperbolic cusp $[0,\infty)\times S^2(3,3,3)$. Along the whole paper, let $$\Gamma:=\pi_1^{orb}(S^2(3,3,3))=<a,b|a^3=b^3=(ab)^3=1>$$ be the orbifold fundamental group of $S^2(3,3,3)$. Then there is a lattice subgroup $\Gamma_0<\Gamma$ given by
$$\Gamma_0:=<a^2b,ba^2>\cong\Z^2$$
which fits in the following exact sequence
\begin{equation}\label{exsq}
    1\rightarrow\Gamma_0\rightarrow\Gamma\rightarrow \Z_3\rightarrow 1. 
\end{equation}
\begin{figure}[ht]
	\begin{center}
		\begin{tikzpicture}[line join = round, line cap = round, scale=.8]
			\draw[thick, color=blue]  (-1,0)--(1,0)--(0, 1.73)--cycle;
			\draw[red, fill=red] (-1,0) circle(0.04);
			\draw[red, fill=red] (1,0) circle(0.04);
			\draw[red, fill=red] (0,1.73) circle(0.04);
 			 \draw[very thin] (-.6,.7) arc[x radius=.6, y radius=.15, start angle = 180, end angle= 360] ;
 			 \draw[very thin, dashed] (-.6,.7) arc[x radius=.6, y radius=.15, start angle = 180, end angle= 0] ;
			\draw[very thin] (-1+.4,0) to[out=80, in=0] (-1+.4*.5  , 0+.4*.87);
			\draw[dashed, very thin] (-1+.4,0) to[out=160, in=-100] (-1+.4*.5  , 0+.4*.87);
			\draw[very thin] (1-.4,0) to[out=110, in=180+0] (1-.4*.5  , 0+.4*.87);
			\draw[ dashed, very thin] (1-.4,0) to[out=20, in=-80] (1-.4*.5  , 0+.4*.87);
			\draw[very thin]  (0-.4*.5, 1.73-.4*.87) to[out=-60, in=180+60] (0+.4*.5, 1.73-.4*.87);
			\draw[ dashed, very thin]  (0-.4*.5, 1.73-.4*.87) to[out=30, in=180-30] (0+.4*.5, 1.73-.4*.87);
			\draw (-6/5,0) node{$3 $};
			\draw (6/5,0) node{$3 $};
			\draw (1/10,2) node{$3 $};
	\end{tikzpicture}
\end{center}
\caption{The turnover $S^2(3,3,3)$.}
\label{Figure:Turnover}
\end{figure}
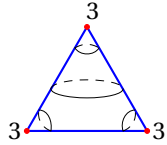
\par Let us denote by $\rhp$ the holonomy of the hyperbolic structure of $[0,\infty)\times S^2(3,3,3)$. By \cite{MR3336086}, we know that $\rhp|_{\Gamma_0}$ can be regarded as a generalized cusp of the $C_0$-type (following the nomenclature of that paper). Namely, we can take
\begin{equation}\label{g1}
\rhp(a^2b),\rhp(ba^2)\in C_0:=\left\{M(x,y):=\begin{bmatrix}
1 & x & y & \frac{x^2+y^2}{2} \\
 & 1 & 0 & x \\
 &  & 1 & y \\
 &  &  & 1 
\end{bmatrix}:(x,y)\in \R^2\right\}<\Slr.
\end{equation}
Notice that $C_0$ is isomorphic to $\R^2$ as a Lie group. Further, for any $re^{i\theta}\in\C^*$, the similarity 
\begin{equation*}
\Omega_{(r,\theta)}:=\begin{bmatrix}
r &  &  &  \\
 & \cos(\theta) & -\sin(\theta)&  \\
 & \sin(\theta) & \cos(\theta) &  \\
 &  &  & \frac{1}{r} 
\end{bmatrix}\in \Slr 
\end{equation*}
acts by conjugation on $C_0$ via 
$$\Ad{\Omega_{(r,\theta)}}\left( M(x,y)\right)=M(\varphi_{(r,\theta)}(x,y)),$$
where 
$\varphi_{(r,\theta)}:\R^2\xrightarrow{}\R^2$ is the composition $\varphi_{(r,\theta)}=D_r\circ R_\theta$ of a rotation of angle $\theta$ and a dilation of factor $r$. (Or equivalently, a complex dilation of factor $re^{i\theta}$). Hence, the group of similarities acts transitively on $C_0^*:=C_0\setminus\{\id\}$. On the other hand, $a$ is an element of order three acting by conjugation on $C_0$, so $\rhp(a)$ must be a $2\pi/3$ rotation regarded as automorphism of the $(x,y)$-plane. Thus,
\begin{equation}\label{a}
    \rhp(a)=\Omega_{\left(1,\frac{2\pi}{3}\right)}.
\end{equation}
By the condition in Equation \eqref{g1}, we can take $(x_0,y_0)\neq(0,0)$ to define
\begin{equation}\label{x0y0}
    \rhp(a^2b)=M(x_0,y_0).
\end{equation}
Then,
\begin{equation}
    \rhp(ba^2)= \rhp(a(a^2b)a^2)=M\left(R_{\frac{2\pi}{3}}(x_0,y_0)\right).
\end{equation} 
The transitivity of the action of the group of similarities on $C_0^*$ means that any choice of $(x_0,y_0)\in\left(\R^2\right)^*$ in Equation (\ref{x0y0}) is geometrically equivalent. From now on, we set $(x_0,y_0)=(1,0)$, namely:
   \begin{equation}
       \rhp(a^2b)=M(1,0), \quad \rhp(ba^2)=M\left(\cos(\frac{2\pi}{3}),\sin(\frac{2\pi}{3})\right), \quad \rhp(a)=\Omega_{\left(1,\frac{2\pi}{3}\right)}.
   \end{equation}
Now, let us start to analyze the variety of representations  $\Rep(\Gamma,\slr)$ near $\rhp$. For this purpose, the following group cohomology results are useful.
\begin{prp}\label{0coc}
    The $0$-cohomology groups satisfy:
        \begin{equation}
\dim\left(H^0(<a^2b>,\Arh)\right)=\dim\left(H^0(<ba^2>,\slr)\right)=5.
    \end{equation}
    \begin{equation}
        \dim\left(H^0(\Gamma_0,\Arh)\right)=3,
    \end{equation}
    and
    \begin{equation}\label{eq10coh}
        \dim\left(H^0(\Gamma,\Arh)\right)=1,
    \end{equation}
\end{prp}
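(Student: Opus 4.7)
The plan is to identify $H^0(H,\Arh)$ with the centralizer $\slr^{\Arh(H)}$ and then compute these four centralizers by elementary linear algebra in $4\times 4$ matrices. The key structural observation is that $M(x,y)=\exp(xE_1+yE_2)$ where $E_1:=e_{12}+e_{24}$ and $E_2:=e_{13}+e_{34}$ are commuting nilpotents satisfying $E_1E_2=E_2E_1=0$ and $E_1^2=E_2^2=e_{14}$. Since $\rhp(a^2b)=\exp(E_1)$ and $\rhp(ba^2)=\exp(-\tfrac{1}{2}E_1+\tfrac{\sqrt{3}}{2}E_2)$ are unipotent, the factorisation $\exp(N)-\id=N\cdot f(N)$ with $f(N)$ invertible for nilpotent $N$ gives $\slr^{\Ad\exp(N)}=\ker(\ad N)$. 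Thus the first two dimension counts reduce to computing $\ker(\ad E_1)$ in $\slr$ (the second generator being $\rhp(a)$-conjugate to the first).

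To compute $\ker(\ad E_1)$ I would solve $E_1X=XE_1$ entry by entry. The left side has row $1$ equal to row $2$ of $X$ and row $2$ equal to row $4$ of $X$, while the right side has column $2$ equal to column $1$ and column $4$ equal to column $2$. Identifying coefficients forces $x_{21}=x_{23}=x_{31}=x_{32}=x_{41}=x_{42}=x_{43}=0$, together with $x_{22}=x_{11}=x_{44}$ and $x_{24}=x_{12}$; the trace-zero condition then yields $x_{33}=-3x_{11}$, so five parameters remain. This settles the first equality.

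For $\Gamma_0=\langle a^2b,\, ba^2\rangle$, I would impose in addition the commutation with $E_2$ on the parametrised matrix above. Row and column comparisons for $E_2X=XE_2$ force $x_{11}=0$ (and hence the whole diagonal vanishes) and $x_{34}=x_{13}$, leaving exactly the three-dimensional subspace $\mathrm{span}(E_1,E_2,e_{14})$. For the full group $\Gamma$, I would further impose $\Arh(a)$-invariance on this subspace. Conjugation by $\Omega_{(1,2\pi/3)}$ rotates $(x,y)\mapsto R_{2\pi/3}(x,y)$, so at the Lie algebra level it acts on $\mathrm{span}(E_1,E_2)$ as rotation by $2\pi/3$; on $e_{14}$ it acts trivially, because $\Ad(\Omega_{(1,2\pi/3)})(E_1^2)=(\Ad(\Omega_{(1,2\pi/3)})E_1)^2$, which equals $e_{14}$ thanks to $E_1^2=E_2^2=e_{14}$ and $E_1E_2=0$. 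The fixed set of a $2\pi/3$-rotation on a plane is trivial, so only $\R\,e_{14}$ survives, giving dimension one.

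The argument is essentially computational, and there is no serious obstacle. The only subtle point, rather than a difficulty, is the reduction from group-level invariants under a unipotent element to the kernel of its infinitesimal generator; this shortcut fails for the order-three element $a$, which is why the final case must be handled by an explicit $\mathbb{Z}_3$-action on the three-dimensional space already isolated for $\Gamma_0$.
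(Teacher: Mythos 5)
Your proposal is correct and follows essentially the same route as the paper: both identify $H^0(H,\Arh)$ with the $\Arh(H)$-fixed subspace of $\slr$ and compute it in three nested stages (for $\langle a^2b\rangle$, then $\Gamma_0=\langle a^2b,ba^2\rangle$, then $\Gamma=\langle \Gamma_0,a\rangle$), with the last step using the $2\pi/3$-rotation action of $\Arh(a)$ on the plane $\mathrm{span}(E_1,E_2)$. Your explicit reduction from $\slr^{\Ad\exp N}$ to $\ker(\ad N)$ for nilpotent $N$, and the resulting identification of $\slr^{\rhp(\Gamma_0)}$ with $\mathrm{span}(E_1,E_2,e_{14})$, is a slightly cleaner presentation of what the paper calls ``elementary computations''; in fact, your centralizer of $\rhp(a^2b)$ (with the relation $x_{24}=x_{12}$ and a nontrivial diagonal component $\mathrm{diag}(a,a,-3a,a)$) is the correct one, whereas the corresponding displayed matrix in the paper appears to carry a typographic slip, although the dimension count and the subsequent two intersections agree with yours.
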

\begin{proof}
   Let us describe the cohomology groups via invariant subspaces of $\slr$:
    \begin{align*}
        &H^0(<a^2b>,\Arh)\cong\left(\slr\right)^{\rhp(<a^2b>)},\\ &H^0(\Gamma_0,\Arh)\cong\left(\slr\right)^{\rhp(\Gamma_0)}=\left(\left(\slr\right)^{\rhp(<a^2b>)}\right)^{\rhp(<ba^2>)},\\        &H^0(\Gamma,\Arh)\cong\left(\slr\right)^{\rhp(\Gamma)}=\left(\left(\slr\right)^{\rhp(\Gamma_0)}\right)^{\rhp(<a>)}.
    \end{align*}
   Elementary computations yield
    \begin{align*}
        &\left(\slr\right)^{\rhp(<a^2b>)}\cong\left\{ \begin{bmatrix}
0 & x_1 & x_2 & x_3 \\
 &  &  & x_4 \\
 &  &  & x_5 \\
 &  &  &  0
\end{bmatrix}: x_1,x_2,x_3,x_4,x_5\in \R\right\}.
    \end{align*}
Moreover, $\Ad{\Omega_{\left(1,\frac{-2\pi}{3}\right)}}\left(\rhp(a^2b)\right)=\rhp(ba^2)$ induces a natural isomorphism $$\left(\slr\right)^{\rhp(<a^2b>)}\cong\left(\slr\right)^{\rhp(<ba^2>)}.$$
Thus, the intersection 
$$\left(\slr\right)^{\rhp(<a^2b>)}\cap \mathrm{Ad}\Omega_{\left(1,\frac{2\pi}{3}\right)}\left( \left(\slr\right)^{\rhp(<a^2b>)}\right)=\left\{ \begin{bmatrix}
 0& x_1 & x_2 & x_3 \\
 &  &  & x_1 \\
 &  &  & x_2 \\
 &  &  &  0
\end{bmatrix}: x_1,x_2,x_3\in \R\right\}$$
is isomorphic to $\left(\slr\right)^{\rhp(\Gamma_0)}$. Finally, 
$$\left(\slr\right)^{\rhp(\Gamma)}\cong \left(\slr\right)^{\rhp(\Gamma_0)}\cap \Ad\Omega_{\left(1,\frac{2\pi}{3}\right)}\left(\left(\slr\right)^{\rhp(\Gamma_0)}\right)=\left\{ \begin{bmatrix}
 0&0  &0  & x \\
 &  &  & 0 \\
 &  &  &  0\\
 &  &  &  0
\end{bmatrix}: x\in \R\right\}.$$
\end{proof}
Since $\rhp(\Gamma)$ fixes a line in $\R^4$, it is not irreducible. However, it has a nice property we call being strongly regular (from \cite{MR4495671}). Actually, \cite{MR4495671} deals with a complex Lie group, but we can ignore that since the inclusion $\Rep(\Gamma,\Slr)\subset\Rep(\Gamma,\mathrm{SL_4\C})$ is smooth.
\begin{de}
    A Euclidean turnover $\Orb^2$ group representation $\rho:\pi_1(\Orb^2)\to G$ into a Lie group $G$ is \textbf{strongly regular} if, for the torus fundamental group $\Z\oplus\Z<\pi_1(O^2)$, then $\dim \mathfrak{g}^{\rho(\Z\oplus\Z)}=\rank G$ and
    the projection of $\rho(\Z\oplus\Z)$ on $G/\mathrm{Z}(G)$, where $\mathrm{Z}(G)$ is the center of $G$, is contained in a connected abelian subgroup.
\end{de}
\begin{lmm}
   The representation $\rhp$ is strongly regular in $\Rep(\Gamma,\Slr)$.
\end{lmm}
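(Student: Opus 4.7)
The plan is to verify the two defining conditions of strong regularity directly, both of which follow with little extra work from what has already been established.

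For the rank condition, recall that $\Slr$ has rank $3$. By Proposition \ref{0coc}, we have $\dim H^0(\Gamma_0,\Arh)=3$, and since $H^0(\Gamma_0,\Arh)\cong(\slr)^{\rhp(\Gamma_0)}=\mathfrak{g}^{\rhp(\Gamma_0)}$, this immediately yields $\dim \mathfrak{g}^{\rhp(\Gamma_0)}=\mathrm{rank}\,\Slr$. So the rank condition of the definition is satisfied by the lattice subgroup $\Gamma_0\cong\Z^2<\Gamma$.

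For the abelianity condition, recall that by construction (Equation \eqref{g1}) we have $\rhp(a^2b),\rhp(ba^2)\in C_0$, and therefore $\rhp(\Gamma_0)\subset C_0$. The subgroup $C_0<\Slr$ is isomorphic to $(\R^2,+)$ as a Lie group, and in particular it is connected and abelian. Hence the image of $\rhp(\Gamma_0)$ in the projectivization $\Slr/Z(\Slr)=\Pslr$ is contained in the image of $C_0$, which is again a connected abelian subgroup. This verifies the second condition.

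There is no substantive obstacle here: the statement is essentially a bookkeeping consequence of the explicit normal form chosen for $\rhp$ in Equations \eqref{g1}--\eqref{x0y0} together with the cohomological computation of Proposition \ref{0coc}. The only subtlety worth double-checking is that Proposition \ref{0coc} was stated for the adjoint representation on $\slr$, so that $H^0(\Gamma_0,\Arh)$ really coincides with the centralizer $\mathfrak{g}^{\rhp(\Gamma_0)}$ needed in the definition of strong regularity, but this identification is built into the proof of Proposition \ref{0coc}.
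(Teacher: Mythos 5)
Your proof is correct and follows essentially the same route as the paper: the rank condition is read off from Proposition \ref{0coc} together with $\operatorname{rank}\Slr=3$, and the abelianity condition follows because $\rhp(\Gamma_0)\subset C_0$ and $C_0$ is connected abelian, so its image in $\Pslr$ is contained in the connected abelian subgroup $\pi(C_0)$. The paper just spells out the intermediate step that $\operatorname{rank}\Slr=\dim\mathfrak{C}(\slr)=3$ explicitly; otherwise the arguments coincide.
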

\begin{proof}
    Recall that the Cartan subalgebra $\mathfrak{C}(\slr)$ of $\slr$ is the one of traceless matrices, hence
    $$\rank(\Slr)=\dim\mathfrak{C}(\slr))=3.$$
    Thus, from Proposition \ref{0coc},
    $$\dim\left(H^0(\Gamma_0,\Arh)\right)=\rank(\Slr).$$
Also, let $Z=\{\pm \id\}$ be the center of $\Slr$ and $\pi:\Slr\to \Slr/Z=\Pslr$ the natural projection. Since $\rhp(\Gamma_0)$ si a subset of $C_0$, which is linearly homeomorphic to $\R^2$, $\pi\left(\rhp(\Gamma_0)\right)$ is contained in the abelian connected subgroup $\pi(C_0)$.
\end{proof}
Before we state and prove the following lemma, let us recall the notion of group cohomology in degree 1. We define $H^1(\Gamma,\Arh)$ to be the quotient of $Z^1(\Gamma,\Arh)/B^1(\Gamma,\Arh)$, where $Z^1(\Gamma,\Arh)$ is the group of twisted morphisms $\Gamma\to\slr$ with respect to the action $\Arh$, and $B^1(\Gamma,\Arh)$ is the normal subgroup of inner cocycles. For a more detailed explanation, see Section 1.4 in Löh \cite{loh}.
\begin{lmm}\label{lema}
    The representation $\rhp$ is a smooth point of $\Rep(\Gamma,\Slr)$.  Thus,
    \begin{equation}\label{eqlemaTan}
        \dim \Rep_0(\Gamma,\Slr)=16
    \end{equation}
    and
    \begin{equation}
       \dim X_0(\Gamma,\Slr)= \dim H^1(\Gamma,\Ad \rhp),\label{eqn}
    \end{equation}
    where $\Rep_0(\Gamma,\Slr)$ is the component of $\Rep(\Gamma,\Slr)$ containing $\rhp$ and $\mathrm{X}_0(\Gamma,\Slr)$ is the component of $\mathrm{X}(\Gamma,\Slr)$ containing the character of $\rhp$. 
\end{lmm}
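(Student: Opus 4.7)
The plan is to split the lemma into two parts: a dimension count via group cohomology, and the smoothness claim itself. Once both are in hand, the identities \eqref{eqlemaTan} and \eqref{eqn} follow from orbit--stabilizer and the identification of the Zariski tangent space at $\rhp$ with $Z^1(\Gamma,\Ad\rhp)$.

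First I would compute $\dim H^1(\Gamma,\Ad\rhp)$. Since $S^2(3,3,3)$ is a closed oriented 2-orbifold and $\Gamma$ contains $\Gamma_0\cong\Z^2$ as an orientation-preserving finite-index subgroup, $\Gamma$ is a virtual Poincar\'e duality group of dimension $2$ over $\R$. The Killing form on $\slr$ identifies $\Ad\rhp$ with its dual, so duality combined with Proposition \ref{0coc} gives $\dim H^2(\Gamma,\Ad\rhp)=\dim H^0(\Gamma,\Ad\rhp)=1$. The orbifold Euler characteristic $\chi^{\mathrm{orb}}(S^2(3,3,3))=2-3(1-\tfrac{1}{3})=0$ then forces $\chi(\Gamma,\Ad\rhp)=0$, hence $\dim H^1=2$. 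The standard four-term exact sequence
\begin{equation*}
    0\to H^0(\Gamma,\Ad\rhp)\to\slr\xrightarrow{d^0} Z^1(\Gamma,\Ad\rhp)\to H^1(\Gamma,\Ad\rhp)\to 0
\end{equation*}
then yields $\dim Z^1(\Gamma,\Ad\rhp)=15-1+2=16$, matching the right-hand side of \eqref{eqlemaTan}.

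Next comes smoothness, which I expect to be the main obstacle. The Zariski tangent space at $\rhp$ is $Z^1(\Gamma,\Ad\rhp)$, so the local dimension of $\Rep(\Gamma,\Slr)$ near $\rhp$ is at most $16$; this alone does not rule out a nonreduced scheme structure or higher-order (Goldman-type cup-product) obstructions to integrating infinitesimal deformations. To close the gap I would invoke the smoothness criterion for strongly regular representations of Euclidean turnover groups from \cite{MR4495671}: its hypothesis is precisely the preceding lemma, and its conclusion is that $\rhp$ is a smooth point of $\Rep(\Gamma,\Slr)$ lying on a single component of dimension $16$, which must then be $\Rep_0$.

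Finally, once $\Rep_0$ is smooth at $\rhp$ with dimension $16$, the equality \eqref{eqn} follows from orbit--stabilizer: the tangent space to the $\Slr$-conjugation orbit through $\rhp$ is $B^1(\Gamma,\Ad\rhp)\cong\slr/H^0(\Gamma,\Ad\rhp)$ of dimension $15-1=14$, so the (real) GIT quotient satisfies $\dim X_0=16-14=2=\dim H^1(\Gamma,\Ad\rhp)$.
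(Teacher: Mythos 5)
Your argument follows essentially the same route as the paper: both rest on the smoothness criterion for strongly regular representations (Theorem 4.11 of \cite{MR4495671}), with the dimension count reduced to the twisted Euler characteristic and Poincar\'e duality; the paper reads \eqref{eqn} and the dimension formula directly from that theorem, whereas you rederive them via the coboundary exact sequence and orbit--stabilizer, which is harmless and slightly more self-contained.

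One step deserves tightening. The assertion that $\chi^{\mathrm{orb}}(S^2(3,3,3))=0$ ``forces'' $\chi(\Gamma,\Ad\rhp)=0$ is not a general implication. The relevant quantity is the twisted Euler characteristic $\tilde\chi(\Orb^2,\Ad\rhp)=\chi(\Orb^2\setminus\Sigma)\dim\slr+\sum_{v\in\Sigma}\dim H^0(\Stab(v),\Arh)$, and one needs the explicit value $\dim H^0(\Stab(v),\Arh)=5$ from Proposition~\ref{0coc} to get $\tilde\chi=-15+3\cdot 5=0$. The numerical coincidence $5=\tfrac{1}{3}\dim\slr$, which is what makes $\tilde\chi$ proportional to $\chi^{\mathrm{orb}}\cdot\dim\slr$, has to be verified rather than assumed; it holds here because $\mathrm{tr}\bigl(\Ad\rhp(a)^k\bigr)=0$ for $k=1,2$, but it fails for other cone angles (compare the computation in Lemma~\ref{ultimolem}, where the order-two stabilizer contributes $7$, not $\tfrac{1}{2}\cdot 15$). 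With that citation inserted, the proof is sound.
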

\begin{proof}
    To be short, let $\Orb^2:=S^2(3,3,3)$. Smoothness is a straight consequence of Theorem 4.11 in \cite{MR4495671}, and so are the formulae (\ref{eqn}) and
    $$\dim \Rep(\Gamma,\Slr)=\Tilde{\chi}(\Orb^2,\Ad\rhp)+\dim \Slr+ \dim\left(\slr\right)^{\Arh(\Gamma)},$$
    where $\Tilde{\chi}(\Orb^2,\Ad\rhp)$ is the \textit{twisted Euler characteristic} (Definition 3.1 in \cite{MR4495671}). Let us denote by $\Sigma$ the set of branching points of $\Orb^2$. Then, $\Orb^2\setminus\Sigma$ is topologically a thrice-punctured sphere. Hence,
    \begin{equation}\label{todoturn}
        \tilde{\chi}(\Orb^2,\Ad\rhp)=\chi(\Orb^2\setminus\Sigma)\dim \Slr+\sum_{v\in \Sigma} \dim\left(H^0(\Stab(v),\Arh)\right)=-15+3\cdot 5=0.
    \end{equation}
    Thus,  $$\dim \Rep(\Gamma,\Slr)=15+1=16.$$
\end{proof}
\begin{rmk}
    As an immediate corollary of smoothness, we have the natural identification
    \begin{equation*}
        T_{\rhp}\Rep(\Gamma,\Slr)\cong Z^1(\Gamma,\Ad \rhp).
    \end{equation*}
\end{rmk}
\begin{prp}\label{tangent}
    $\dim \mathrm{X}_0(\Gamma,\Slr)=2$.\label{dimslice}
    \end{prp}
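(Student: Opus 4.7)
The plan is to read off $\dim X_0(\Gamma,\Slr)$ directly from the identity
$$\dim X_0(\Gamma,\Slr)=\dim H^1(\Gamma,\Ad\rhp)$$
already established in Lemma \ref{lema}, and then compute the right-hand side using the defining short exact sequence
$$0\to B^1(\Gamma,\Ad\rhp)\to Z^1(\Gamma,\Ad\rhp)\to H^1(\Gamma,\Ad\rhp)\to 0.$$
So the task reduces to computing the dimensions of the spaces of cocycles and coboundaries.

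For the cocycles, I would invoke the smoothness of $\rhp$ in $\Rep(\Gamma,\Slr)$ proved in Lemma \ref{lema}: this gives the identification $Z^1(\Gamma,\Ad\rhp)\cong T_{\rhp}\Rep(\Gamma,\Slr)$ noted in the Remark, so
$$\dim Z^1(\Gamma,\Ad\rhp)=\dim \Rep_0(\Gamma,\Slr)=16$$
by the dimension count \eqref{eqlemaTan}.

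For the coboundaries, recall that $B^1(\Gamma,\Ad\rhp)$ is the image of the linear map $\slr\to Z^1(\Gamma,\Ad\rhp)$ sending $v$ to the inner cocycle $\gamma\mapsto \Ad\rhp(\gamma)v-v$, whose kernel is precisely $H^0(\Gamma,\Ad\rhp)$. Hence
$$\dim B^1(\Gamma,\Ad\rhp)=\dim\slr-\dim H^0(\Gamma,\Ad\rhp)=15-1=14,$$
using Proposition \ref{0coc}, namely \eqref{eq10coh}.

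Subtracting,
$$\dim X_0(\Gamma,\Slr)=\dim H^1(\Gamma,\Ad\rhp)=16-14=2.$$
Really no step here is an obstacle, since all the ingredients are already in place: the main input is smoothness of $\rhp$ (which isolates $\dim Z^1$) and the explicit computation of invariants in Proposition \ref{0coc} (which pins down $\dim B^1$). The only potential subtlety is ensuring that the linear map $\slr\to B^1$ indeed has the claimed kernel, but this is just the standard fact that $H^0$ measures infinitesimal conjugation invariance, so nothing new is required.
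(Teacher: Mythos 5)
Your proof is correct, but it takes a different route from the paper. The paper computes $\dim H^1(\Gamma,\Ad\rhp)$ directly from the vanishing of the twisted Euler characteristic together with Poincar\'e duality on the $2$-orbifold $S^2(3,3,3)$: from $\tilde\chi=0$ and $\dim H^0=\dim H^2$ one gets $\dim H^1=2\dim H^0=2$. You instead split the computation as $\dim H^1=\dim Z^1-\dim B^1$, reading off $\dim Z^1=\dim T_{\rhp}\Rep_0(\Gamma,\Slr)=16$ from the smoothness established in Lemma~\ref{lema} and $\dim B^1=\dim\slr-\dim H^0=15-1=14$ from the standard rank--nullity count for the coboundary map, with kernel $(\slr)^{\Arh(\Gamma)}=H^0(\Gamma,\Arh)$. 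Both arguments rest on the same two ingredients already in hand --- the dimension formula from Lemma~\ref{lema} (which itself uses $\tilde\chi=0$) and the invariant computation $\dim H^0=1$ from Proposition~\ref{0coc} --- so the content is equivalent, but your version has the advantage of not needing to invoke Poincar\'e duality again, instead reusing the already-recorded tangent-space dimension of $\Rep_0(\Gamma,\Slr)$.
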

\begin{proof}
    By equality (\ref{eqn}) in Lemma \ref{lema}, it suffices to calculate $\dim H^1(\Gamma,\Arh)$. Moreover, we know from the proof of Lemma \ref{lema} that the twisted Euler characteristic at the hyperbolic holonomy is $ \tilde{\chi}(\Orb^2,\Ad\rhp)=0$. Hence,
    \begin{equation*}
        \dim H^0(\Gamma,\Arh)-\dim H^1(\Gamma,\Arh)+\dim H^2(\Gamma,\Arh)=0.
    \end{equation*}
    By Poincaré duality, $\dim H^0(\Gamma,\Arh)=\dim H^2(\Gamma,\Arh)$, so
    $$\dim H^1(\Gamma,\Arh)=2\dim H^0(\Gamma,\Arh)=2,$$ 
    where we have used that $\dim H^0(\Gamma,\Arh)=1$ (see Proposition \ref{0coc}).
\end{proof}
Now, to state and prove the main result (Theorem \ref{main}) of this section, we give a precise definition of what we mean by `slice'.
\begin{de}[Slice]\label{slice}
      A submanifold $\mathcal{S}_{\rho}<\Rep(\Gamma,\Slr)$ containing a smooth point $\rho\in \Rep(\Gamma,\Slr)$ is a \textbf{slice at $\rho$} if the natural isomorphism $T_\rho \Rep(\Gamma,\Slr)\cong T_{\rho} \mathcal{S}_{\rho} \oplus B^1(\Gamma,\slr)$ holds.
\end{de}
\begin{thm}\label{main}
There is a slice $\mathcal{S}_{\rhp}<\Rep(\Gamma,\Slr)$ at $\rhp$ which represents all the (conjugacy classes of) projective holonomies near $\rhp$. Moreover, the restriction to $\Gamma_0$ of such holonomies is either diagonalizable or hyperbolic.
\end{thm}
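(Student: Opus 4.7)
The plan is to construct the slice explicitly by exhibiting a $2$-parameter family of representations that is transverse at $\rhp$ to the conjugation orbit, adapting the partial slice of Ballas-Danciger-Lee in \cite{MR3780442} to the $\Z_3$-symmetric setting imposed by the generator $a$. From Proposition~\ref{0coc} and Lemma~\ref{lema} one has $\dim Z^1(\Gamma,\Arh)=16$ and $\dim B^1(\Gamma,\Arh)=\dim\slr-\dim H^0(\Gamma,\Arh)=14$; together with Proposition~\ref{tangent} this forces the slice to be exactly $2$-dimensional.

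First I would fix $\rho_{t,s}(a):=\rhp(a)=\Omega_{(1,2\pi/3)}$ throughout the family and parametrize $\rho_{t,s}(a^2b):=A_{t,s}$ with $A_{0,0}=M(1,0)=\rhp(a^2b)$. The identity $ba^2=a(a^2b)a^{-1}$ in $\Gamma$ (using $a^3=1$) then forces $\rho_{t,s}(ba^2)=\rho(a)\,A_{t,s}\,\rho(a)^{-1}$, so $\rho_{t,s}(\Gamma_0)$ is automatically abelian. I would design $A_{t,s}$ so that, for $(t,s)\neq(0,0)$, it lies in a split Cartan subgroup that also contains its $\rho(a)$-conjugate (making $\rho_{t,s}|_{\Gamma_0}$ diagonalizable), while $A_{0,0}\in C_0$ is the hyperbolic cusp holonomy. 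Concretely, one takes $A_{t,s}$ in a $2$-parameter family of maximal abelian subgroups of $\Slr$ whose Lie algebras deform the tangent direction to $C_0$ inside $\slr$. The remaining relations $\rho_{t,s}(b)^3=\id$ and $(\rho_{t,s}(a)\rho_{t,s}(b))^3=\id$ (automatic at the origin) are then satisfied by adjusting the free parameters via the implicit function theorem, using the smoothness of $\rhp$ given by Lemma~\ref{lema}.

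Next, I would compute the derivatives $\partial_t\rho_{t,s}|_{(0,0)}$ and $\partial_s\rho_{t,s}|_{(0,0)}$ as cocycles in $Z^1(\Gamma,\Arh)$ and verify they are linearly independent modulo $B^1(\Gamma,\Arh)$. Since $\dim H^1(\Gamma,\Arh)=2$, this gives the required direct-sum decomposition $Z^1=T_{\rhp}\mathcal{S}_{\rhp}\oplus B^1$ of Definition~\ref{slice}. Because $\rhp$ is a smooth point of $\Rep_0(\Gamma,\Slr)$ and the $\Slr$-orbit through $\rhp$ is transverse to $\mathcal{S}_{\rhp}$ by construction, a standard implicit function theorem argument then yields that every representation near $\rhp$ is conjugate to a member of $\mathcal{S}_{\rhp}$. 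The dichotomy on $\Gamma_0$ is immediate from the construction of $A_{t,s}$.

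The main obstacle is pinning down an explicit family $A_{t,s}$ that simultaneously (a) satisfies both relations $\rho_{t,s}(b)^3=(\rho_{t,s}(a)\rho_{t,s}(b))^3=\id$, (b) is transverse to $B^1$ at $\rhp$, and (c) displays the diagonalizable/hyperbolic dichotomy cleanly. The dimension counts are tight, since only a $2$-parameter family of valid choices can survive, and the delicate point is coordinating the cyclic $\Z_3$-symmetry enforced by conjugation by $\rho(a)$ with the available maximal abelian subgroups of $\Slr$ into which the deformation of $C_0$ can head.
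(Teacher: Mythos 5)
Your skeleton matches the paper's proof: fix $\rho(a)=\rhp(a)$, vary $\rho(a^2b)$ in a $2$-parameter family so that the $\Z_3$-symmetry $ba^2=a(a^2b)a^{-1}$ is automatic, verify the family is transverse to coboundaries, and conclude with a submersion argument using $\dim\Rep_0(\Gamma,\Slr)=16$. The dimension counts you quote are correct, and the observation that $\rho_{t,s}(\Gamma_0)$ is automatically abelian once $\rho(a)$ is fixed is exactly the right organizing remark.

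However, the step you flag as ``the main obstacle'' is precisely where the real content lives, and your proposed workaround does not close it. You cannot obtain the relations $\rho(b)^3=\id$ and $\rho(ab)^3=\id$ by appealing to the implicit function theorem: the IFT would at best give an abstract $2$-dimensional local piece of $\Rep(\Gamma,\Slr)$ with $\rho(a)$ held fixed, with no control over the form of $\rho(a^2b)$, so the diagonalizable/hyperbolic dichotomy would not ``be immediate from the construction'' — it is the construction. The paper instead writes down an explicit family $\Psi_{(u,v)}(a^2b)=\exp(\cdot)$ (a lift of the Ballas--Danciger--Lee family) and proves the group relations \emph{directly}: it uses the BDL parametrization of eigenvectors $p_\infty,p_1,p_2,p_3$ of $\Psi_{(u,v)}(a^2b)$ to observe that $\Omega_{(1,2\pi/3)}=\rhp(a)$ fixes $p_\infty$ and permutes $p_1,p_2,p_3$ cyclically; in the eigenbasis, $\Psi(a)\Psi(a^2b)$ is a cyclic permutation times a diagonal with product $\lambda_1\lambda_2\lambda_3=1$, hence has order three, and likewise for $\Psi(a^2)\Psi(a^2b)$. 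In other words, the relations are \emph{forced by the eigenvector-permutation structure and $\det=1$}, not by tuning parameters. You should either reproduce (or import) that explicit family and check the relations by this mechanism, or give a genuinely different argument that both constructs a concrete slice and exhibits the diagonalizable-vs-hyperbolic dichotomy on $\Gamma_0$; as written, both the existence of the family and the dichotomy are asserted rather than proved.
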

\begin{proof}
    By Proposition \ref{dimslice}, a slice at $\rhp$ needs to have dimension $2$. Let $\mathcal{S}_{\rhp}:=\Psi_{(\R^2)}$ be a 2-dimensional submanifold of $\Rep(\Gamma,\Slr)$ given by 
    \begin{equation}
         \Psi_{(u,v)}(a^2b)=\exp \begin{bmatrix}
 0& 1 & 0 &0  \\
0 & u & v & 1  \\
0 & v & -u & 0\\
 0& 2(u^2+v^2) &  0&0  
\end{bmatrix},\quad\Psi_{(u,v)}(a)=\rhp(a),
    \end{equation}
for any $(u,v)\in\R^2$. Since $\Psi_{(0,0)}(a^2b)=\rhp(a^2b)$ and $<a^2b,a>=\Gamma$, it is clear that $\Psi_{(0,0)}=\rhp$. 
\par We refer to \cite{MR3780442} to see that $\Psi_{(u,v)}(a^2b)$ is diagonalizable whenever $(u,v)\neq (0,0)$. For $(u,v)=t(\cos{3s},\sin{3s})\in \left(\R^2\right)^*$, the authors also parametrize the eigenvectors of $\Psi_{(u,v)}(a^2b)$ (regarded as projective points in $\R\Pp^3$ or vectors in $\R^4$, as convenient) by
\begin{align}
    & p_\infty=(1,0,0,0)
    ,\label{param1}\\
    & p_1(t,\theta)=\Omega_{(1,\theta)}(1,2t,0,2t^2)^t 
    ,\label{param2}\\
    & p_2(t,\theta)=\Omega_{\left(1,\frac{2\pi}{3}\right)}p_1(t,\theta)
    ,\label{param3}\\
    & p_3(t,\theta)=\Omega_{\left(1,\frac{2\pi}{3}\right)}p_2(t,\theta)
    \label{param4}
    ,
\end{align}
whose associated eigenvalues satisfy exactly the relations $\lambda_\infty=1=\lambda_1\lambda_2\lambda_3,$ and $\lambda_1,\lambda_2,\lambda_3>0$. 
\par To see that $\Psi_{(u,v)}$ is actually a homomorphism for $(u,v)\in \left(\R^2\right)^*$, we need to check that 
\begin{equation}\label{commute}
     \left(\left(\Psi_{(u,v)}(a)\right)^2\Psi_{(u,v)}(a^2b)\right)^3=\id, 
 \end{equation}
and
\begin{equation}\label{25}
\left(\Psi_{(u,v)}(a)\Psi_{(u,v)}(a^2b)\right)^3=\id.
\end{equation}
The parametrization above (Equations (\ref{param1} - \ref{param4})) guides us to think of $\Psi_{(u,v)}(a)$ as a projective permutation of the vertices of the triangle $\triangle\left(p_1(t,\theta), p_2(t,\theta), p_3(t,\theta)\right)$ fixing $p_\infty$. Indeed, the conjugation
\begin{equation*}
[p_\infty|p_1(t,\theta)|p_2(t,\theta)|p_3(t,\theta)]^{-1}\Psi_{(u,v)}(a)[p_\infty|p_1(t,\theta)|p_2(t,\theta)|p_3(t,\theta)]
\end{equation*}
is the permutation of order three
\begin{equation*}
          \begin{bmatrix}
 1& 0&  0& 0\\
0&  &  & 1 \\
 0& 1 &  & \\
 0&  & 1&  
\end{bmatrix}.
    \end{equation*}
 Moreover, it implies that $\Psi_{(u,v)}(a)\Psi_{(u,v)}(a^2b)\Psi_{(u,v)}(a^2)$ shares a common basis of eigenvectors with $\Psi_{(u,v)}(a^2b)$. Hence $\Psi_{(u,v)}(a^2b)$ and $\Psi_{(u,v)}(a)\Psi_{(u,v)}(a^2b)\Psi_{(u,v)}(a^2)$ commute, which implies that relation (\ref{commute}) stands. Now, to check that Equation (\ref{25}), let us conjugate $\Psi_{(u,v)}$ so that $\Psi_{(u,v)}(a^2b)$ is diagonal. Then, we obtain:
 \begin{equation}
 \Psi_{(u,v)}(a)\Psi_{(u,v)}(a^2b)=\begin{bmatrix}
 1& 0&  0& 0\\
0&  &  & 1 \\
 0& 1 &  & \\
 0&  & 1&  
\end{bmatrix}   \begin{bmatrix}
 1& &  & \\
&  \lambda_1&  &  \\
 &  &\lambda_2  & \\
 &  & &\lambda_3  
\end{bmatrix}=
\begin{bmatrix}
 1& 0&  0& 0\\
0&  &  & \lambda_3 \\
 0& \lambda_1 &  & \\
 0&  & \lambda_2&  
\end{bmatrix}.  
 \end{equation}
Therefore, $$\left(\Psi_{(u,v)}(a)\Psi_{(u,v)}(a^2b)\right)^3=\lambda_1\lambda_2\lambda_3\id.$$
Thus, relation $\lambda_1\lambda_2\lambda_3=1$ implies that Equation (\ref{25}) holds.
 \par Now, we shall show that $\Psi_{(u,v)}$ is transverse to conjugation. Let us consider two cocycles 
 \begin{align*}
     d_1(\gamma)=\frac{d}{du}\Bigg|_{u=0}\Psi_{(u,0)}(\gamma)\rhp(\gamma)^{-1}, \quad d_2(\gamma)=\frac{d}{dv}\Bigg|_{v=0}\Psi_{(0,v)}(\gamma)\rhp(\gamma)^{-1}, \qquad \forall\gamma\in \Gamma.
 \end{align*}
 They are determined by the images of the generators of $\Gamma$, so we compute the following derivatives:
 \begin{align*}
     &\frac{d}{du}\Bigg|_{u=0}\Psi_{(u,0)}(a)=0, \qquad \frac{d}{du}\Bigg|_{u=0}\Psi_{(u,0)}(a^2b)=
      \begin{bmatrix}
 0& 1/2 & 0&1/6  \\
 & 1 &  &  1/2 \\
 &  & -1 &0\\
 &  &  &0 
\end{bmatrix},\\
& \frac{d}{dv}\Bigg|_{v=0}\Psi_{(0,v)}(a)=0, \qquad \frac{d}{dv}\Bigg|_{v=0}\Psi_{(0,v)}(a^2b)=  \begin{bmatrix}
0 & 0 & 1/2& 0 \\
 &  & 1 &0  \\
 & 1&  &1/2 \\
 0&  &  & 0
\end{bmatrix}.
 \end{align*}
 By construction, $d_1,d_2$ are tangent to $\mathcal{S}_{\rho} :=\Psi_{(\R^2)}$ at $\rhp$, so it suffices to see that they span a 2-dimensional vector space of cocycles which are not inner. We will proceed by contradiction. First, assume $d_1$ is a coboundary. Then, there is an element $v\in\slr$ that fulfills
 \begin{equation}\label{conduno}
     v\in \ker(\Arh(a)-\id)=\left\{\begin{bmatrix}
 a& &  & b\\
& x & y &  \\
& -y &x  & \\
 c&  & & -(a+2x) 
\end{bmatrix}:a,b,c,x,y\in\R\right\}
 \end{equation}
 and
 \begin{equation}
     d_1(a^2b)= (\Arh(a^2b)-\id) v.\label{pt}
 \end{equation}
 Now, given $m\in\mathfrak{gl}_4\R$, let us denote by $(\cdot)_{2,3}$ the $2\times 2$ principal submatrix consisting of elements in rows and columns $2$ and $3$. The condition in Equation \eqref{conduno} implies that, for any such $m\in \mathfrak{gl}_4\R$, we have
 \begin{equation}\label{dostres}
     (vm)_{2,3}=(v)_{2,3}(m)_{2,3},\text{ and} \quad  (mv)_{2,3}=(m)_{2,3}(v)_{2,3}.
 \end{equation}
 Moreover, condition in (\ref{pt}) reads as
 \begin{equation}\label{conddos}
      \rhp(a^2b) v-v\rhp(a^2b)=\frac{d}{du}\Bigg|_{u=0}\Psi_{(u,0)}(a^2b).
 \end{equation}
 Further, Equation (\ref{dostres}) together with Equation (\ref{conddos}) implies
 \begin{equation}\label{auxiliar}
\left(\rhp(a^2b)\right)_{2,3}(v)_{2,3}-(v)_{2,3}\left(\rhp(a^2b)\right)_{2,3}=\begin{bmatrix}
  1 &   \\
 & -1
\end{bmatrix}\neq 0\in \mathrm{GL}_2\R.
 \end{equation}
However, $ \left(\rhp(a^2b)\right)_{2,3}=\id\in\mathrm{GL}_2\R$, so (\ref{auxiliar}) gives rise to a contradiction. A very similar argument carries out the proof of $d_2$ being not an inner cocycle, and in fact for any nontrivial linear combination $\lambda_1d_1+\lambda_2 d_2$ being not an inner cocycle. 
\par We also need to show that any projective holonomy close to the hyperbolic one lies on $\Ad(G)(\mathcal{S}_{\rhp})$, the set of conjugacy classes having a representative in the slice. To do this, it suffices to see that $\Ad:\Slr\times \mathcal{S}_{\rhp} \rightarrow \Rep(\Gamma,\Slr)$ is a submersion. Differentiation at $(\id,\rhp)$ yields
\begin{align*}
    \slr \oplus T_{\rhp}\mathcal{S}_{\rhp}&\rightarrow T_{\rhp}\Rep(\Gamma,\Slr)\\
    (v,z)&\rightarrow v-\Arh(v)+z,
\end{align*}
Hence, $d\Ad_{(\id,\rhp)}(v+z)=0$ implies $z=\Arh(v)-v\in B^1(\Gamma,\Arh)$. Definition \ref{slice} requires that the unique inner cocycle tangent to an slice is the trivial one, so
\begin{equation*}
\ker (d\Ad)_{(\id,\rhp)}\cong H^0(\Gamma,\Arh),
\end{equation*}
which is by equality (\ref{eq10coh}) in Proposition \ref{0coc} a 1-dimensional vector space. Thus,
\begin{equation*}
    \dim{\Im (d\Ad)_{(\id,\rhp)}}=\dim \left(\slr\oplus T_{\rhp}S\right)-\dim \ker (d\Ad)_{(\rhp,\id)}= 15+2-1=16,
\end{equation*}
which equals to $\dim T_{\rhp}\Rep(\Gamma,\Slr)=\dim \Rep_0(\Gamma,\Slr)$ by Lemma \ref{lema}, as desired.
\par Finally, since $\rho_{(u,v)}|_{\Gamma_0}$ is either diagonalizable or hyperbolic, the fact that any holonomy $\rho$ near $\rhp$ is conjugate to someone in $\mathcal{S}_{\rhp}$ implies that any such $\rho$ is either diagonalizable or hyperbolic.
\end{proof} 
\begin{rmk}\label{extra}
    The holonomies we refer to in Theorem \ref{main} will be shown to be actually holonomies of properly convex projective structures at the proof of Theorem \ref{thm}. This provides examples of realization of the claim that \textit{generalized cusps are all deformations of hyperbolic cusps} (see Ballas-Cooper-Leitner \cite{MR4125754}), since diagonal holonomies correspond to generalized cusps of $C_3$ type.
\end{rmk}
\section{Deformations of hyperbolic 3-orbifolds with turnover cusps}\label{3}
In the setting or Riemannian geometry, it is clear what is the meaning of a submanifold to be \textit{totally geodesic}. We generalize this concept following \cite{MR3780442} and \cite{MR1053346}.
\begin{de}[Totally geodesic]\label{tg}
    A boundary component of a projective n-orbifold is \textbf{totally geodesic} if its image by the developing map is contained in $\R\mathbb{P}^{n-1}$. 
\end{de}
Now, we want to characterize projective deformations of hyperbolic 3-orbifolds whose ends have turnover cross section. Any end of such an orbifold is a product $[0,\infty)\times \Orb^2$, where $\Orb^2$ is either a compact Euclidean turnover (so we get a finite volume end, or a \textit{cusp}) or a hyperbolic turnover (so the end is a \textit{funnel} and has infinite volume). We can alternatively include these orbifolds in the setting of finite volume by deformation retracting any funnel to a boundary component. Notice that we can do this because hyperbolic turnovers are always totally geodesic in a hyperbolic 3-orbifold. On the other hand, the list of compact Euclidean turnovers is short, and there are only the subsequent three: $S^2(2,3,6)$, $S^2(2,2,4)$, and $S^2(3,3,3)$. Let us see that the first two of them yield projectively rigid ends, so any flexible cusp is of the form $[0,\infty)\times S^2(3,3,3)$.
\begin{lmm}\label{ultimolem}
    The hyperbolic cusps $[0,\infty)\times S^2(2,3,6)$ and $[0,\infty)\times S^2(2,2,4)$ are projectively rigid.
\end{lmm}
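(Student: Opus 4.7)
The plan is to replicate the cohomological analysis carried out in Section \ref{2} for $S^2(3,3,3)$, now for the other two Euclidean turnovers. For each of $\Orb^2=S^2(2,3,6)$ and $\Orb^2=S^2(2,2,4)$, the goal is to show $\dim H^1(\Gamma,\Arh)=0$; by Lemma \ref{lema} (smoothness of $\rhp$, which still holds because the representation is strongly regular for exactly the same reason as before) this is equivalent to $\dim X_0(\Gamma,\Slr)=0$, and the holonomy principle for cusps recalled in Section \ref{1} then promotes this to rigidity of the hyperbolic cusp.

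I would first compute $\dim H^0(\Gamma,\Arh)$. The translation subgroup $\Gamma_0\cong\Z^2$ is again sent by $\rhp$ into the unipotent group $C_0$, so the argument of Proposition \ref{0coc} yields the same $3$-dimensional space $(\slr)^{\rhp(\Gamma_0)}$ parametrised by coordinates $(x_1,x_2,x_3)$, where the cusp ``time-direction'' $x_3$ is the single $\Gamma$-invariant we expect to survive. The point group $\Gamma/\Gamma_0$ is now cyclic of order $k\in\{4,6\}$ and its generator acts on this space as $\Omega_{(1,2\pi/k)}$, namely as the rotation of angle $2\pi/k$ on the pair $(x_1,x_2)$ and as the identity on $x_3$. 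Since for $k>1$ this rotation has only $(0,0)$ as a fixed vector, we get $\dim H^0(\Gamma,\Arh)=1$. Next, I would compute the twisted Euler characteristic using the same formula as in Lemma \ref{lema}:
\begin{equation*}
\tilde{\chi}(\Orb^2,\Arh)=\chi(\Orb^2\setminus\Sigma)\dim\Slr+\sum_{v\in\Sigma}\dim H^0(\Stab(v),\Arh).
\end{equation*}
If $v$ has stabilizer of order $n$ then $\rhp(\Stab(v))$ is generated by a conjugate of $\Omega_{(1,2\pi/n)}$, whose centralizer in $\slr$ has dimension $7$ when $n=2$ (the middle $2\times 2$ block is central in $\mathrm{GL}_2\R$) and dimension $5$ when $n\in\{3,4,6\}$ (the real centralizer of a nontrivial $2\times 2$ rotation is $2$-dimensional). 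A direct evaluation gives $\tilde\chi=-15+7+5+5=2$ in both cases.

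The third ingredient is Poincaré duality. Both orbifolds are closed orientable $2$-orbifolds, and $\Arh$ is naturally self-dual via the Killing form of $\slr$, so $H^2(\Gamma,\Arh)\cong H^0(\Gamma,\Arh)$. Combined with the Euler characteristic identity $\tilde\chi=\dim H^0-\dim H^1+\dim H^2$, this yields
\begin{equation*}
\dim H^1(\Gamma,\Arh)=2\dim H^0(\Gamma,\Arh)-\tilde\chi=2-2=0,
\end{equation*}
which is precisely the infinitesimal rigidity at $\chp$ we sought.

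The main obstacle I anticipate is the first step, where one must exhibit explicitly how the point group $\Z_k$ acts on $(\slr)^{\rhp(\Gamma_0)}$. Unlike in the $(3,3,3)$ case, $\Gamma$ is no longer generated by a single order-$k$ rotation together with a translation, but by two rotations of different orders (one of which is $2$), so a careful choice of generators and a conjugation in $\Slr$ are needed to present the point-group action as the standard rotation $\Omega_{(1,2\pi/k)}$ on the $(x_1,x_2)$-plane while keeping $\rhp(\Gamma_0)\subset C_0$. Once this normalisation is in place, the dimension counts in Steps 1--2 reduce to transcriptions of the computations in Proposition \ref{0coc} and Lemma \ref{lema}, and the rigidity conclusion follows word-for-word from the argument of Proposition \ref{tangent}.
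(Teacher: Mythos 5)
Your proposal is correct and follows essentially the same route as the paper: compute the twisted Euler characteristic, using the key observation that an order-$2$ cone point contributes a $7$-dimensional $H^0$ while the others contribute $5$, then combine $\dim H^0(\Gamma,\Arh)=1$ with Poincar\'e duality to get $\dim H^1=0$ and invoke a rigidity/holonomy principle. One small remark worth flagging: both you and the paper write $S^2(2,2,4)$, which is actually spherical, not Euclidean; the intended turnover is $S^2(2,4,4)$, and it is for that one that your count $-15+7+5+5=2$ is correct (the literal $(2,2,4)$ would give $-15+7+7+5=4$, which with $\dim H^0=1$ would force a negative $\dim H^1$).
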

\begin{proof}
    We shall prove that the first cohomology at the hyperbolic holonomy $\mathrm{h_{hyp}}$ is trivial, which by Weil's theorem (\cite{MR169956}) implies local rigidity.
    \par Let $\Orb^2$ be any of $S^2(2,2,4)$ or $S^2(2,3,6)$. The stabilizer at any branching point is given by a cyclic group of rotations acting on $C_0$ as we saw for $S^2(3,3,3)$ (see equations \eqref{g1} and \eqref{a}). The key point is that the action of such groups in $\slr$ stabilizes a 5-dimensional vector subspace regardless of the (finite) order of the group \textit{unless} it is $2$. Namely, if $\gamma\in\pi_1\Orb^2$ has order two, $\mathrm{h_{hyp}}(\gamma)=1\oplus-\id\oplus 1$ acts trivially on the $2\times 2$ principal submatrix consisting of elements in rows and columns 2 and 3 of any matrix in $\slr$, and then preserves a 7-dimensional subspace of $\slr$. Thus, for the twisted Euler characteristic (see Definition 3.1 in \cite{MR4495671}) we get
    \begin{equation*}
        \tilde{\chi}(\Orb^2,\Ad\mathrm{h_{hyp}})=\chi(\Orb^2\setminus\Sigma)\dim \Slr+\sum_{v\in \Sigma} \dim\left(H^0(\Stab(v),\slr)\right)=-15+2\cdot 5+7=2.
    \end{equation*}
    Therefore, 
    $$\dim H^0(\Orb^2,\Ad\mathrm{h_{hyp}})-\dim H^1(\Orb^2,\Ad\mathrm{h_{hyp}})+\dim H^2(\Orb^2,\Ad\mathrm{h_{hyp}})=\tilde{\chi}(\Orb^2,\Ad\mathrm{h_{hyp}})=2$$
    and, by Poincaré duality, we obtain
    $$\dim H^1(\Orb^2,\Ad\mathrm{h_{hyp}})=2\left(\dim H^0(\Orb^2,\Ad\mathrm{h_{hyp}})-1\right).$$
    Formal computations yield $\dim H^0(\Orb^2,\Ad\mathrm{h_{hyp}})=1$, so we get $\dim H^1(\Orb^2,\Ad\mathrm{h_{hyp}})=0$, as claimed.
\end{proof}
Our main result in this section reads as follows:
\begin{thm}
    Let $\Orb^3$ be a topologically finite hyperbolic 3-orbifold whose ends have turnover cross section. Then, any projective deformation of $\Orb^3$ is a properly convex projective 3-orbifold whose ends are either hyperbolic cusps or totally geodesic.\label{thm}
\end{thm}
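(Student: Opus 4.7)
The plan is to analyze each end separately using the algebraic tools established in Section \ref{2}, and then combine the end-by-end picture with a global convexity argument. Throughout, one should regard $\Orb^3$ as homotopy equivalent to a compact orbifold with turnover boundary, obtained by deformation retracting each funnel onto its totally geodesic compact hyperbolic turnover boundary.

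First, I would dispose of the funnel ends and the rigid cusp ends. For a funnel with compact hyperbolic turnover cross section $S^2(n_1,n_2,n_3)$ (with $\sum 1/n_i<1$), a cohomological calculation parallel to Lemma \ref{ultimolem}, using that each branching-point stabilizer has finite order and fixes a subspace of $\slr$ of dimension at most $7$, shows $\dim H^1(\pi_1^{orb}(S^2(n_1,n_2,n_3)),\Arh)=0$. Hence the holonomy of the boundary turnover stays conjugate to a representation into $\mathrm{SO}(3,1)$ and, in particular, still preserves a projective plane, so the funnel end remains totally geodesic in the sense of Definition \ref{tg}. For a cusp end with cross section $S^2(2,3,6)$ or $S^2(2,2,4)$, Lemma \ref{ultimolem} directly gives local rigidity, so the end persists as a hyperbolic cusp.

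Second, I would treat the flexible cusp case, namely cusps of type $S^2(3,3,3)$. Theorem \ref{main} exhibits a slice $\mathcal{S}_{\rhp}$ representing every nearby conjugacy class of cusp holonomy, with the restriction to $\Gamma_0$ being either diagonalizable or hyperbolic. In the diagonalizable case, the holonomy principle of Cooper-Long-Tillmann \cite{MR3336086}, applied to generalized cusps as in Ballas-Danciger-Lee \cite{MR3780442}, promotes the algebraic deformation to a geometric generalized cusp of $C_3$ type, whose holonomy preserves a projective plane and whose developing image lies in an $\R\Pp^2$; this is exactly a totally geodesic end. In the hyperbolic case, the restriction already lies in a copy of $\mathrm{SO}(3,1)$, producing a genuine hyperbolic cusp.

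Finally, I would globalize. The end-by-end analysis shows that every deformation of the holonomy near $\rhp$ extends to an $(\Slr,\R\Pp^3)$-structure on $\Orb^3$ with controlled ends, via the holonomy principle for orbifolds with generalized-cusp ends (a direct adaptation of \cite{MR3336086,MR3780442} from the manifold to the good orbifold setting). Proper convexity is then obtained from Koszul-type openness: the undeformed hyperbolic structure is properly convex, and proper convexity is an open condition on the deformation space of projective structures with hyperbolic or generalized-cusp ends, as exploited in \cite{MR3780442,MR4125754}. Hence every sufficiently small deformation yields a properly convex projective 3-orbifold whose ends are either hyperbolic cusps or totally geodesic. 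The main obstacle is this last step: one must verify that the openness of proper convexity, together with the patching of the local slices from Theorem \ref{main} at each flexible cusp into a global representation, behaves well in the noncompact orbifold setting. This is precisely where the Cooper-Long-Tillmann holonomy principle and the Ballas-Danciger-Lee framework for deforming through generalized cusps carry the proof.
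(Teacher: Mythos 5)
Your treatment of the funnels contains a genuine error. You assert that for a hyperbolic turnover $S^2(n_1,n_2,n_3)$ the same cohomology computation as in Lemma~\ref{ultimolem} gives $\dim H^1(\pi_1^{orb}(S^2(n_1,n_2,n_3)),\Arh)=0$. This is false whenever all three cone orders are $\geq 3$ (e.g.\ $S^2(3,3,4)$). In that case each branching-point stabilizer fixes a $5$-dimensional subspace of $\slr$ (not $7$, which only happens for order $2$), so the twisted Euler characteristic is $\tilde\chi = -15 + 3\cdot 5 = 0$; and the hyperbolic holonomy of a turnover factors through $\mathrm{SO}(2,1)\subset\Slr$ as $\tilde\rho\oplus 1$, whose centralizer in $\slr$ is one-dimensional, so $\dim H^0=1$. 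Poincar\'e duality then gives $\dim H^1 = 2\dim H^0 - \tilde\chi = 2$, not $0$. The rigidity in Lemma~\ref{ultimolem} is genuinely special to the presence of an order-$2$ cone point, and only two of the three Euclidean turnovers have one; hyperbolic turnovers generically do not. So funnels \emph{can} deform, and the conclusion you want is not rigidity but rather that every $\Slr$ holonomy of a turnover group splits off a fixed line. This is exactly what Corollary~4.4 of Porti~\cite{porti} provides: any $\Slr$ representation of a hyperbolic turnover group is conjugate to $\tilde\rho\oplus 1$ with $\tilde\rho$ in $\mathrm{SL}_3\R$, hence preserves a projective plane, hence keeps the funnel totally geodesic in the sense of Definition~\ref{tg} even after a nontrivial deformation.

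Your globalization step is also missing the mechanism that makes the holonomy principle applicable. Theorem~0.2 of Cooper--Long--Tillmann~\cite{MR3780436} concerns properly convex orbifolds \emph{all of whose ends are generalized cusps}, whereas $\Orb^3$ may have funnels, which are not generalized cusps. The paper resolves this by observing that, after deformation, the funnel cross sections remain totally geodesic (by the argument above); one then \emph{doubles $\Orb^3_\Pp$ along these totally geodesic turnovers} to obtain a finite-volume properly convex orbifold $2\Orb^3_\Pp$ whose ends are all generalized cusps, applies Cooper--Long--Tillmann there, and concludes proper convexity of the original $\Orb^3_\Pp$. Without this doubling, appealing to ``Koszul-type openness'' or the CLT holonomy principle is not directly justified. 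Finally, for the flexible $S^2(3,3,3)$ cusp, you pass silently from the torus cusp cover $\tilde C$ to the orbifold cusp $C$; one must check that the order-three element $\Psi(a)$ preserves the triangle $T_{(t,\theta)}$ and the dual fixed point $p_\infty$ bounding the invariant convex domain, so that the totally geodesic generalized-cusp structure descends to the $\Z_3$-quotient. That verification is short but it is not automatic.
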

\begin{proof}
    First, let us analyze the deformations of the funnels. Corollary 4.4 in Porti \cite{porti} tells us that any $\Slr$ holonomy $\rho$ of a hyperbolic turnover is conjugate to a holonomy of the form $\tilde{\rho}\oplus 1$, where $\tilde{\rho}$ is a $\mathrm{SL_3}\R$ holonomy representation. Thus, since hyperbolic turnovers are totally geodesic in the hyperbolic structure of $\Orb^3$, we deduce that they remain totally geodesic in the deformed projective orbifold (in the broader sense of Definition \ref{tg}). 
    \par Now, we focus on a flexible cusp $C$. We follow the notation of Section \ref{2}, and also of the proof of Theorem \ref{main}. The geometric interpretation of the exact sequence (\ref{exsq}) implies that the deformation of an (orbifold) cusp $C=[0,\infty)\times S^2(3,3,3)$ of fundamental group $\Gamma$ yields a deformation of its (manifold) cusp covering $\Tilde{C}=[0,\infty)\times T^2$ with fundamental group $\Gamma_0$. We also know that any nontrivial deformation $\delta_{(t,\theta)}\Tilde{C}$ is a projective generalized cusp with diagonalizable holonomy, by Theorem \ref{main}. This situation is studied in Section 5 of \cite{MR3780442}, where it is proved that any such $\delta_{(t,\theta)}\tilde{C}$ is totally geodesic. Indeed, the authors of \cite{MR3780442} show that there is a properly convex domain $\Omega_{(t,\theta)}$ invariant by the action of $$\Psi_{(t\cos{3\theta},t\sin{3\theta})}(\Gamma_0)$$ such that its boundary $\partial \Omega_{(t,\theta)}$ contains both a triangle
    $$T_{(t,\theta)}:=\triangle\left(p_1(t,\theta),p_2(t,\theta),p_3(t,\theta)\right)$$
    and a fixed point $p_{\infty}$ dual to the projective plane containing $T_{(t,\theta)}$. We shall see that $\delta_{(t,\theta)}C$ is itself a totally geodesic (orbifold) generalized cusp. To do it, note that $\Gamma=<\Gamma_0,a>$ and also that $\Psi_{(t\cos{3\theta},t\sin{3\theta})}(a)$ preserves the triangle $T_{(t,\theta)}$ and the point $p_{\infty}$ for any $t>0$. Thus, the quotient $$\mfaktor{<\Psi_{(t\cos{3\theta},t\sin{3\theta})}(a)>}{\delta_{(t,\theta)}\Tilde{C}}$$
    is a totally geodesic (orbifold) generalized cusp.
    \par Finally, we apply the holonomy principle in Theorem 0.2 of Cooper-Long-Tillmann \cite{MR3780436} (see also Theorem 2.3 in \cite{MR3780442}) to ensure that deformations of properly convex orbifolds whose ends are generalized cusps produce again properly convex orbifolds \textit{provided that the generalized cusps are deformed to generalized cusps}. To argue in such a way with the projective orbifold $\Orb^3_\Pp$ obtained by deforming $\Orb^3$, we obtain a finite volume properly convex orbifold $2\Orb^3_\Pp$ whose ends are generalized cusps by doubling along the hyperbolic turnovers, which are totally geodesic. Then, we conclude that the projective orbifold $\Orb^3_\Pp$ is itself properly convex. Thus, any projective deformation of $\Orb^3$ is a properly convex projective orbifold whose ends are hyperbolic cusps (if the sectional turnover is not $S^2(3,3,3)$, or if the deformation is trivial) or totally geodesic (the case above of nontrivial deformation of a cusp $C=[0,\infty)\times S^2(3,3,3)$, or any funnel of hyperbolic turnover cross section).
\end{proof}
\begin{rmk}
    Notice that there are no assumptions about the infinitesimal deformations of the hyperbolic 3-orbifold $\Orb^3$, in contrast to \cite{MR3780442}.
\end{rmk}
Now, we discuss a family of examples to illustrate Theorem \ref{thm}.
\begin{ex}
\label{ex:Opqr}
    Consider the family of hyperbolic 3-orbifolds $\{\Orb^3_{p,q,r}:p,q,r\geq 3\}$ in Figure~\ref{Fig:exs} with orbifold fundamental group $$\pi_1\left(\Orb^3_{p,q,r}\right):=<a,b,c|a^p=b^q=c^r=1,c=aba^{-1}b^{-1}>,$$
    which has two ends with orbifold fundamental groups
    $$<a,c|a^p=(a^{-1}c)^p=c^r=1>\cong \pi_1\left(S^2(p,p,r)\right)$$
    and
    $$ <b,c|b^q=(cb)^q=c^r=1>\cong \pi_1\left(S^2(q,q,r)\right),$$
    respectively.
    Therefore, both ends have turnover cross section, and we may apply Theorem \ref{thm}.
    \par For $p=q=r=3$, the projective structure moduli space was studied in Porti-Tillmann \cite{MR4264240}. This $\Orb_{(3,3,3)}^3$ is a finite volume hyperbolic 3-orbifold whose two cusps are $[0,\infty)\times S^2(3,3,3)$. Thus, Theorem \ref{thm} tells us that any projective deformation of the hyperbolic structure of $\Orb^3_{3,3,3}$ is a properly convex projective structure with hyperbolic cusps or totally geodesic ends.
    \par For $pqr>3^3$, the orbifold $\Orb^3_{(p,q,r)}$ remains hyperbolic but we do not get finite volume anymore. If $pr,qr>3$, then $\Orb^3_{(p,q,r)}$ has two turnover funnels, $[0,\infty)\times S^2(p,p,r)$ and $[0,\infty)\times S^2(q,q,r)$, and no cusps. Thus, after any deformation of the hyperbolic structure, $\Orb^3_{(p,q,r)}$ remains a properly convex projective orbifold with totally geodesic ends. If $pr=3^2$, then $\Orb^3_{(p,q,r)}$ is a hyperbolic 3-orbifold with one funnel $[0,\infty)\times S^2(q,q,r)$ and one cusp $[0,\infty)\times S^2(3,3,3)$. Therefore, the projectively deformed orbifold is properly convex with two ends, one of them a totally geodesic projective funnel and the other one a generalized cusp which is either hyperbolic or totally geodesic with diagonal holonomy.
\end{ex}
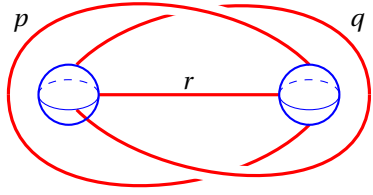
\begin{figure}[h]
	\begin{center}
		\begin{tikzpicture}[scale=.8]
			\begin{scope}[shift={(10,0)}]
				\draw [very thick, color=red] (-1.5,0)--(1.5,0);
				\draw [very thick, color=red] (-1.866, .5) to[out=45, in=90] (3,0);
				\draw [line width = 2.5mm, color=white](2, .5) to[out=135, in=90] (-3,0);
				\draw [very thick, color=red] (2, .5) to[out=135, in=90] (-3,0);
				\draw [very thick, color=red] (2, -.5) to[out=-135, in=-90] (-3,0);
				\draw [line width = 2.5mm, color=white] (-1.866, -.25) to[out=-45, in=-90] (3,0);
				\draw [very thick, color=red] (-1.866, -.25) to[out=-45, in=-90] (3,0);
				\draw (-2.8,1.2) node{$p$};
				\draw (2.8,1.2) node{$q$};
				\draw (0,0.2) node{$r$};
				\draw[thick, blue, opacity=.6] (-2,0) circle [radius=.5];
				\draw[blue, thin, opacity=.6] (-2.5,0) arc[ x radius=.5, y radius=.25, start angle=180, end angle=360];
				\draw[blue, thin, dashed, opacity=.6] (-2.5,0) arc[ x radius=.5, y radius=.25, start angle=180, end angle=0];
				\draw[blue, thin, opacity=.6] (1.5,0) arc[ x radius=.5, y radius=.25, start angle=180, end angle=360];
				\draw[blue, thin, dashed, opacity=.6] (1.5,0) arc[ x radius=.5, y radius=.25, start angle=180, end angle=0];
				\draw[thick, blue, opacity=.6] (2,0) circle [radius=.5];
			\end{scope}
		\end{tikzpicture}
	\end{center}
	\caption{The family of orbifolds $\Orb^3_{p,q,r}$, with $p,q,r\geq 3$, in Example~\ref{ex:Opqr}.
	}
	\label{Fig:exs}       
\end{figure}
\section{Global properties of the character variety $X(\Gamma,\Slr)$}\label{4}
Recall that $\Gamma:=\pi_1\left(S^2(3,3,3)\right)$. Throughout this section, we deal with the global problem of projective structures on the orbifold cusp $[0,\infty)\times S^2(3,3,3)$. There is a straightforward description due to Lawton (see \cite{MR2329569}) of the $\mathrm{SL}_3\C$ character variety of the free group with two generators. Then, our first goal is to find the relation between the $\mathrm{SL}_3\C$ and $\Slr$ character varieties of $\Gamma$.
\par First, we address the relation between the $\mathrm{SL}_n\R$ character variety and the points of the $\mathrm{SL}_n\C$ character variety having a real semisimple representation in their fiber.
\begin{lmm}\label{lmm1}
    Let $G$ be a finitely generated group, and $n\in\N$ an odd number. Then, any $\mathrm{SL_n\C}$ conjugacy class of semisimple representations having a real representative yields a unique $\mathrm{SL_n\R}$ conjugacy class. Thus,
    there is a continuous injection
    $$[i_*]:X(G,\mathrm{SL_n\R)}\to X(G,\mathrm{SL_n\C)}$$
    verifying $[i_*]\circ\pi_{\R}=\pi_{\C}\circ i_*$, where $i_*$ is given by postcomposition with $i:\mathrm{SL_n\R}\hookrightarrow\mathrm{SL_n\C}$ and $\pi_{\K}$ are the quotient projections from the $\mathrm{SL_n}\K$ representation variety to the $\mathrm{SL_n}\K$ character variety, for $\K\in\{\R,\C\}$.
\end{lmm}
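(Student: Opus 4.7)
The plan is to first establish the uniqueness claim by a linear-algebra argument on the space of intertwiners, and then use it to construct and analyze $[i_*]$ at the level of characters.

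First, I would fix semisimple real representations $\rho_1,\rho_2\colon G\to\mathrm{SL}_n\R$ that are $\mathrm{SL}_n\C$-conjugate by some $g\in\mathrm{SL}_n\C$, and consider the linear space of intertwiners
\begin{equation*}
W:=\{h\in M_n(\C):h\,\rho_1(\gamma)=\rho_2(\gamma)\,h\ \text{for all}\ \gamma\in G\}.
\end{equation*}
Because its defining equations have real coefficients, $W$ is the complexification of $W_\R:=W\cap M_n(\R)$. Since $g\in W$ has $\det g\neq 0$, the polynomial $\det\colon W\to\C$ is not identically zero and hence does not vanish identically on the real form, so I can pick $h_0\in W_\R$ with $\det h_0\in\R^{*}$. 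Here the hypothesis that $n$ is odd becomes essential: every nonzero real admits a real $n$-th root, so rescaling produces $h:=(\det h_0)^{-1/n}h_0\in\mathrm{SL}_n\R$ that still intertwines $\rho_1$ and $\rho_2$. This establishes the uniqueness assertion.

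Next, to construct $[i_*]$ I would pass to characters. For $\K\in\{\R,\C\}$, points of $X(G,\mathrm{SL}_n\K)$ are classified by the traces of images of group elements, equivalently by closed orbits of semisimple representations (as recalled from \cite{MR1087217}). The composition $\pi_\C\circ i_*\colon \Rep(G,\mathrm{SL}_n\R)\to X(G,\mathrm{SL}_n\C)$ is continuous, and it is constant on $\pi_\R$-fibers: if $\rho_1,\rho_2$ share a point of $X(G,\mathrm{SL}_n\R)$ then their $\mathrm{SL}_n\R$-orbit closures meet, which forces all traces of $\rho_1$ and $\rho_2$ to agree and hence their images in $X(G,\mathrm{SL}_n\C)$ to coincide. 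Therefore the composition descends to a continuous map $[i_*]\colon X(G,\mathrm{SL}_n\R)\to X(G,\mathrm{SL}_n\C)$ satisfying $[i_*]\circ\pi_\R=\pi_\C\circ i_*$ tautologically.

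Injectivity then follows by combining the two preceding steps. Given two real characters with the same image under $[i_*]$, I would pick semisimple representatives $\rho_1,\rho_2$ in each real class (available by real GIT); viewed over $\C$, they share a character and are thus $\mathrm{SL}_n\C$-conjugate, and the uniqueness step promotes this to an $\mathrm{SL}_n\R$-conjugation, so the two real characters coincide. The main obstacle is the first step, where one must simultaneously guarantee that the real form $W_\R$ is large enough to contain an invertible matrix (handled by the complexification identity $W=W_\R\otimes_\R\C$) and that the scalar needed to renormalize the determinant is real (handled by the odd-parity hypothesis on $n$); without either ingredient the argument collapses.
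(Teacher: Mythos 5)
Your proof is correct and follows essentially the same route as the paper's: both reduce injectivity to producing a \emph{real} invertible intertwiner from a complex one, by exploiting that $\det$ is a polynomial with real coefficients which cannot vanish identically on the real form of the intertwiner space once it is nonzero at some complex point, and both use the oddness of $n$ only to handle the $\mathrm{SL}_n$ versus $\mathrm{GL}_n$ normalization (you rescale $h_0$ by a real $n$-th root; the paper reduces to $\mathrm{GL}_n$ up front via the same $n$-th-root observation). The paper writes the key existence step concretely as the one-variable polynomial $P(x)=\det(\Re M + x\,\Im M)$ with $P(i)\neq 0$, whereas you phrase it abstractly as $W=W_\R\otimes_\R\C$ with $\det\not\equiv 0$ on $W_\R$; these are the same argument.
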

\begin{proof}
Since $n$ is odd, then $$\mathrm{GL_n}\K\to\mathrm{SL_n}\K,M\to\frac{1}{\sqrt[n]{\det M}}M$$ is well-defined. Thus, the conjugacy action of $\mathrm{GL_n\R}$ and $\mathrm{SL_n}\R$ coincides. Hence, $$\Rep^{SS}(G,\mathrm{SL_n}\R)/\mathrm{SL_n}\R\cong \Rep^{SS}(G,\mathrm{SL_n}\R)/\mathrm{GL_n}\R.$$ 
Since $\Rep^{SS}(G,\mathrm{SL_n}\R)/\mathrm{GL_n}\R$ is a subset of $\Rep^{SS}(G,\mathrm{GL_n\R})/\mathrm{GL_n\R}$, it suffices (actually, it is more) to show the lemma for $$[i_*]:X(G,\mathrm{GL_n\R)}\to X(G,\mathrm{GL_n\C)}.$$
    \par Now, notice that if $\rho\in\Rep(G,\mathrm{GL_n\R)}$ is semisimple, then $i_*\rho\in \Rep(G,\mathrm{GL_n\C)}$ is semisimple (because any $\rho$-invariant subspace either is itself $i_*\rho$-invariant or splits into a direct sum of $i_*\rho$-invariant subspaces). Thus, there is a well-defined continuous injection $$i_*:\Rep^{SS}(G,\mathrm{GL_n\R)}\to \Rep^{SS}(G,\mathrm{GL_n\C)}.$$
    Further, both projections $\pi_{\R},\pi_{\C}$ are continuous and surjective. Hence, we just need to check injectivity, i.e. that two different $\mathrm{GL_n}\R$ classes of real representations yield two different $\mathrm{GL_n}\C$ classes.
    \par Let $\rho_1,\rho_2$ two real semisimple representations such that $i_*\rho_1,i_*\rho_2$ lie in the same $\mathrm{GL_n}\C$-class. Then, there is a complex matrix $M\in \mathrm{GL_n}\C$ such that 
    \begin{equation}\label{reps12}
       M\rho_1(g)=\rho_2(g) M, \quad\forall g\in G.
    \end{equation}
    Taking real and imaginary parts in Equation \eqref{reps12}, we get
    \begin{align*}
      & \Re(M)\rho_1(g)=\rho_2(g) \Re(M), \quad\forall g\in G,\\
       & \Im(M)\rho_1(g)=\rho_2(g) \Im(M), \quad\forall g\in G.
    \end{align*}
    Thus, for any real matrix $M'$ in the real vector space of $n\times n$ matrices generated by $<\Re(M),\Im(M)>$, the equality $M'\rho_1(g)=\rho_2(g) M'$ holds for any $g\in G$. Then, it suffices to show that $<\Re(M),\Im(M)>$ contains some automorphism $M'$, because then $M'\in\mathrm{GL_n\R}$ conjugates $\rho_1$ into $\rho_2$, so $\pi_\R(\rho_1)=\pi_\R(\rho_2)$. To do that, let us define the polynomial with real coefficients
    $$P(x):=\det\left(\Re(M)+x\Im(M) \right)\in\R[x].$$
    Since $P(i)=\det(M)\neq 0$, then $P\not\equiv 0$, so it must exist some $x_0\in\R$ (actually, any real number but finitely many fits) such that $P(x_0)\neq 0$. Therefore, $\Re(M)+x_0\Im(M)$ is a $\mathrm{GL_n}\R$ matrix conjugating $\rho_1$ to $\rho_2$, as desired.
    \end{proof}
    \begin{rmk}
    When $n$ is even, Lemma \ref{lmm1} fails. However, it still holds when $n$ is even if we work with $\mathrm{SL_n\R}^\pm$ and $\mathrm{SL_{n}\C}^\pm$ (instead of $\mathrm{SL_n\R}$ and $\mathrm{SL_{n}\C}$, respectively). 
    \end{rmk}
     \begin{rmk}
    Lemma \ref{lmm1} also applies to orbit spaces of representations, meaning that $i_*$ induces an injective function $$[i_*]_{\mathrm{orb}}:\Rep(G,\mathrm{SL_n\R})/\mathrm{SL_n}\R\to \Rep(G,\mathrm{SL_n\C)}/\mathrm{SL_n\C}.$$
    This is consequence of the fact that the proof does not make use of the semisimplicity of representations.
    \end{rmk}
    Once Lemma \ref{lmm1} is established, we need to find the relation between $X(\Gamma,\Slr)$ and $X(\Gamma,\mathrm{SL}_3\R)$. This relation is suggested by Theorem \ref{main}, which describes a slice around $\rhp\in\Rep(\Gamma,\Slr)$ containing only reducible representations.
    \begin{prp}
      \label{nonsimple}
        Any character $\chi\in X(\Gamma_,\mathrm{SL_4\R})$ is not simple. Moreover,
        \begin{equation}\label{s}
            X(\Gamma_,\mathrm{SL_4\R})=i_*\left(\Rep^{SS}(\Gamma_,\mathrm{SL_3\R})\right)/\Slr\cup j_*\left(\Rep^{SS}(\Gamma_,\mathrm{SL_2\R}\times \mathrm{SL_2\R})\right)/\Slr,
        \end{equation}
        where $i:\mathrm{SL_3\R}\hookrightarrow\Slr$ and $j:\mathrm{SL_2\R}\times \mathrm{SL_2\R}\hookrightarrow\Slr$ are the natural inclusions, and the star subindex means postcomposition. 
    \end{prp}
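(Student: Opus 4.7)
The proposition has two parts: no character $\chi\in X(\Gamma,\Slr)$ comes from a simple (irreducible) representation, and every semisimple $\rho$ splits as a $3{+}1$ or $2{+}2$ block sum. My plan is to prove the first by ruling out $\R$-irreducible $\rho:\Gamma\to\Slr$ via Clifford's theorem applied to $\Gamma_0\lhd\Gamma$, and then read off the block decomposition from the partitions of $4$.

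For the irreducibility ruleout, I would assume for contradiction that $\rho$ is $\R$-irreducible, complexify to $V_\C:=V\otimes\C$, and fix a $\C$-irreducible summand $W\subseteq V_\C$. Clifford's theorem then gives $W|_{\Gamma_0}\cong m(\chi_1\oplus\cdots\oplus\chi_k)$ as a sum of characters $\Gamma_0\to\C^*$, with $\Gamma/\Gamma_0\cong\Z/3$ acting transitively on $\{\chi_i\}$, hence $k\in\{1,3\}$ and $km=\dim_\C W\leq 4$. If $k=1$, then $\Gamma_0$ acts on $W$ by scalars; the eigenspaces of $\rho(a)|_W$ are $\rho$-invariant, and irreducibility of $W$ forces $\rho(a)|_W=\lambda I$ with $\lambda^3=1$, so $\Gamma$ acts on $W$ by scalars and $\dim_\C W=1$. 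If $k=3$, then $\dim_\C W=3$, forcing $V_\C=W\oplus\psi$ with $\psi$ a $1$-dim character; complex conjugation permutes $\C$-irreducible summands of $V_\C$, and the absence of another summand of dimension $3$ makes $W$ self-conjugate, so $W$ descends to a $3$-dimensional $\R$-subrepresentation of $V$, contradicting irreducibility. Hence every $\C$-irreducible summand of $V_\C$ is $1$-dimensional, so $V_\C$ is a sum of characters of $\Gamma$; the real structure then groups these into $\R$-subrepresentations of dimension $1$ or $2$, none of which combine into a $4$-dimensional $\R$-irreducible.

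For the decomposition part, any semisimple $\rho$ splits as a direct sum of $\R$-irreducibles whose dimensions sum to $4$. Having excluded dimension $4$, the possible partitions are $3{+}1$, $2{+}2$, $2{+}1{+}1$, and $1{+}1{+}1{+}1$, all of which regroup as either $3{+}1$ or $2{+}2$. Since $\Gamma^{ab}=(\Z/3)^2$ has order coprime to $2$, every character $\Gamma\to\R^*$ is trivial, so every 1-dim summand is trivial and the determinant of any higher-dimensional summand equals $1$. This places the $3{+}1$ representatives inside $i_*\Rep^{SS}(\Gamma,\mathrm{SL_3\R})$ and the $2{+}2$ ones inside $j_*\Rep^{SS}(\Gamma,\mathrm{SL_2\R}\times\mathrm{SL_2\R})$. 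Passing to $\Slr$-conjugacy gives the claimed union.

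The main obstacle will be the $k=3$ case, since Clifford's theorem alone does not forbid a $3$-dim $\C$-irreducible summand of $V_\C$; the reality argument on $V_\C$ closes that gap by forcing such a summand to be self-conjugate and therefore to descend to a proper $\R$-subrepresentation of $V$, contradicting irreducibility.
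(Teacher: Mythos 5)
Your proof is correct but takes a genuinely different route from the paper. The paper works geometrically: it observes that any semisimple $\rho:\Gamma\to\Slr$ satisfying $a^3=b^3=(ab)^3=1$ acts on any invariant subspace with determinant one, so it suffices to produce an invariant line or plane; it then conjugates $\rho(a)$ to one of $\id$, $P_3\oplus 1$, or $R\oplus R$ and, in the latter two cases, establishes the invariant subspace by computing the twisted orbifold Euler characteristic of $S^2(3,3,3)$ with coefficients in $\R^4$ (resp.\ $\Lambda^2\R^4$), applying Poincar\'e duality to get $\dim H^0(\Gamma,\R^4)\geq 1$ (resp.\ $\dim H^0(\Gamma,\Lambda^2\R^4)\geq 3$), and, for the plane case, invoking B\'ezout via the Pl\"ucker embedding to show the invariant linear system of $2$-forms must meet $\mathrm{Gr}_2(\R^4)$. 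You instead run Clifford theory on the abelian normal subgroup $\Gamma_0\lhd\Gamma$ of index $3$: for a $\C$-irreducible summand $W$ of $V_\C$, the orbit size $k\mid 3$ forces $\dim_\C W\in\{1,3\}$ (scalar action plus eigenspaces of $\rho(a)$ when $k=1$; transitive orbit when $k=3$), and the $k=3$ case is killed by conjugation-stability of the unique $3$-dimensional summand, which would descend to a proper real subrepresentation. This is a cleaner, more elementary argument that avoids the orbifold cohomology computations entirely; its advantage is self-containedness, while the paper's advantage is that the Euler-characteristic/Poincar\'e-duality machinery is already set up and reused throughout (Proposition~\ref{0coc}, Lemma~\ref{lema}, Lemma~\ref{ultimolem}). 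One small remark: your closing step (``the real structure groups these into $\R$-subrepresentations of dimension $1$ or $2$'') deserves a sentence spelling out that once $V_\C$ is a sum of characters, the isotypic component of any $\chi$ (if $\bar\chi=\chi$) or the sum $U_\chi\oplus U_{\bar\chi}$ (if $\bar\chi\neq\chi$) is conjugation-stable and has a proper real form, which contradicts $\R$-irreducibility of a $4$-dimensional $V$; but the idea is right and the gap is easily filled. Your use of $\Gamma^{\mathrm{ab}}\cong(\Z/3)^2$ to force trivial $1$-dimensional real summands and unit determinants on the blocks, thereby landing in $\mathrm{SL}_3\R$ or $\mathrm{SL}_2\R\times\mathrm{SL}_2\R$, matches the paper's determinant observation exactly.
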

    \begin{proof}
    Notice that the relations $a^3=b^3=(ab)^3=\id$ force any semisimple $\rho:\Gamma\to \Slr$ fixing a subspace $V<\R^4$ to act on it by $\mathrm{SL(V)}$ automorphisms, i.e. having determinant equal to one. Notice that $\mathrm{SL_1(\R)}$ is just the identity. Then, it suffices to prove that any semisimple representation $\Gamma\to \Slr$ fixes a line or a plane to get Equation \eqref{s}.
    \par The key fact we shall use is that the \textit{twisted Euler characteristic} of an orbifold (Definition 3.1 in \cite{MR4495671}) can be computed taking into account no more than the topology of the space of regular points and the twisted zero cohomology at the stabilizer of each branching point. Even more, since $S^2(3,3,3)$ has the same stabilizer at its three branching points, $<a>\cong \Z_3$, we just need to analyze the image $\rho(a)$ to compute the twisted Euler characteristic. Since $a$ has order three, then $\rho(a)$ must be conjugate to one of
     $$\id,P_3\oplus 1,R\oplus R,$$
     where $P_3$ is a $3\times 3$ permutation matrix of order three and $R$ is a $2\times 2$ rotation matrix of angle $\frac{2\pi}{3}$. We shall study these three cases separately.
     \par If $\rho(a)$ is conjugate to $\id$ (so trivially $\rho(a)=\id)$, then $\rho(a^2b)=\rho(ba^2)$ and $\rho$ fixes any eigenspace of $\rho(a^2b)$. Thus, $\rho$ fixes a line or a plane, and the proposition is proved for this case.
     \par Now, assume that $\rho(a)$ is conjugate to $P_3\oplus 1$. The symmetries of this matrix suggest that $\rho$ fixes a line, so we calculate $H^0\left(\Gamma,\R^4\right)$. To do so, we compute the twisted Euler characteristic:
     $$\tilde{\chi}\left((S^2(3,3,3),\R^4\right)=\chi\left(S^2\setminus\{*\}_{i=1}^3\right)\dim\R^4+3\dim H^0(<a>,\R^4).$$
     Since the trice punctured sphere $S^2\setminus\{*\}_{i=1}^3$ has Euler characteristic $-1$ and $H^0(<a>,\R^4)\cong\R^2$, we obtain:
      $$\tilde{\chi}\left((S^2(3,3,3),\R^4\right)=-4+3\cdot 2=2.$$
    Hence,
    \begin{equation*}
        \dim H^0(\Gamma,\R^4)-\dim H^1(\Gamma,\R^4)+\dim H^2(\Gamma,\R^4)=\tilde{\chi}\left((S^2(3,3,3),\R^4\right)=2,
    \end{equation*}
    and by Poincaré duality we get
     \begin{equation}  
    \dim H^0(\Gamma,\R^4)\geq 1.
    \end{equation}
    Thus, $\rho$ fixes a line, as desired. Therefore, any character represented by $\rho$ is in $i_*\left(\Rep^{SS}(\Gamma_,\mathrm{SL_3\R})\right)/\Slr$.
    \par Now, assume that $\rho(a)$ is conjugate to $R\oplus R$. The symmetries of this matrix suggest that $\rho$ fixes a plane. Indeed, we shall show that $\rho$ fixes a point of the Grassmannian of planes, $\mathrm{Gr_2(\R^4)}$. To do so, consider $\mathrm{Gr_2(\R^4)}$ as a projective hypersurface of $\Pp(\Lambda^2\R^4)$ via Plücker embedding, where $\Lambda^2\R^4$ denotes the vector space of 2-forms in $\R^4$. We consider the natural action $\Gamma\acts \Lambda^2\R^4$ via $(\rho\otimes\rho)|_{\Lambda^2\R^4}$, and study the dimension of $H^0(\Gamma,\Lambda^2\R^4)$. 
    \par First, we compute $H^0(<a>,\Lambda^2\R^4)$. Let us denote by $\omega\neq 1$ a complex cubic root of unity and by $\{e_i\}_{i=1}^4$ a basis of $\C^4$ such that $\rho(a)=\mathrm{diag}(\omega,\omega,\omega^2,\omega^2)$ (we abuse the notation and use $\rho$ instead of $z_*\rho$, where $z:\Slr\hookrightarrow\mathrm{SL_4\C}$ is the natural inclusion). Then,
    $$H^0(<a>,\Lambda^2\C^4)\cong<e_1\wedge e_3,e_1\wedge e_4,e_2\wedge e_3,e_2\wedge e_4>\cong \C^4.$$
    Thus, $H^0(<a>,\Lambda^2\R^4)\cong\R^4$ and we obtain
    $$\tilde{\chi}\left((S^2(3,3,3),\Lambda^2\R^4\right)=-6+3\cdot 4=6,$$
    where we have used that $\dim \Lambda^2\R^4=\frac{4(4-1)}{2}=6$. Hence,
     \begin{equation*}
        \dim H^0(\Gamma,\Lambda^2\R^4)-\dim H^1(\Gamma,\Lambda^2\R^4)+\dim H^2(\Gamma,\Lambda^2\R^4)=\tilde{\chi}\left((S^2(3,3,3),\Lambda^2\R^4\right)=6,
    \end{equation*}
    and by Poncaré duality,
    \begin{equation}  
    \dim H^0(\Gamma,\Lambda^2\R^4)\geq 3.
    \end{equation}
    Thus, the projectivization $\Pp\left(H^0(\Gamma,\Lambda^2\R^4)\right)$ is a projective space of dimension at least $2$. It follows from Bézout theorem that any projective plane meets any projective hypersurface, so the intersection $\Pp\left(H^0(\Gamma,\Lambda^2\R^4)\right)\cap \mathrm{Gr}_2(\R^2)$ cannot be empty, meaning that $\rho$ preserves at least one plane. Therefore, a character represented by $\rho$ lies on $j_*\left(\Rep^{SS}(\Gamma_,\mathrm{SL_2\R}\times \mathrm{SL_2\R})\right)/\Slr$. 
    \end{proof}
    \begin{lmm}\label{lemasln}
 Let $G$ be a finitely generated group, and let $n\in\N$ an odd number. Then, any $\mathrm{SL_{n+1}\R}$ conjugacy class of semisimple representations having a line of fixed points yields a unique $\mathrm{SL_n\R}$ conjugacy class. Thus,
    there is a continuous injection
    $$[i_*]:X(G,\mathrm{SL_n\R})\to X(G,\mathrm{SL_{n+1}\R})$$
    verifying $[i_*]\circ\pi_{n}=\pi_{n+1}\circ i_*$, where $i_*$ is given by postcomposition with $i:\mathrm{SL_{n}}\R\hookrightarrow\mathrm{SL_{n+1}\R}$, and $\pi_{k}$ is the quotient projection from the $\mathrm{SL_k}\R$ representation variety to the $\mathrm{SL_k}\R$ character variety.
    \end{lmm}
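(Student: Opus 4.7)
The plan is to mirror the structure of Lemma \ref{lmm1}, with the block inclusion $i:\mathrm{SL_n\R}\hookrightarrow\mathrm{SL_{n+1}\R}$, $A\mapsto A\oplus 1$, playing the role that $\mathrm{SL_n\R}\hookrightarrow\mathrm{SL_n\C}$ played there. The map $[i_*]$ is obviously well-defined and continuous on characters; everything reduces to proving its injectivity, after which the first sentence of the statement (uniqueness of the reduction to $\mathrm{SL_n\R}$) will follow automatically.

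For injectivity, I will take $\rho_1,\rho_2\in\Rep^{SS}(G,\mathrm{SL_n\R})$ satisfying $M\,i_*\rho_1\,M^{-1}=i_*\rho_2$ for some $M\in\mathrm{SL_{n+1}\R}$, which is precisely the assertion that $i_*\rho_1\cong i_*\rho_2$ as real $G$-representations of dimension $n+1$. Writing $\rho_i\cong 1^{k_i}\oplus\rho_i'$, where $1^{k_i}$ is the sum of the trivial isotypic copies in $\rho_i$ and $\rho_i'$ has no trivial subrepresentation, the isotypic decompositions of the extended representations read $i_*\rho_i\cong 1^{k_i+1}\oplus\rho_i'$. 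The Krull--Schmidt theorem for the semisimple category of finite-dimensional $\R[G]$-modules forces $k_1=k_2$ and $\rho_1'\cong\rho_2'$, so $\rho_1\cong\rho_2$ as $G$-representations over $\R$. Any $A\in\mathrm{GL_n\R}$ realizing this isomorphism conjugates $\rho_1$ to $\rho_2$, and rescaling $A\mapsto A/\sqrt[n]{\det A}$ (well defined since $n$ is odd, exactly as in Lemma \ref{lmm1}) lands it in $\mathrm{SL_n\R}$, giving $\pi_n(\rho_1)=\pi_n(\rho_2)$.

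For the first part of the statement, given a semisimple $\rho\in\Rep^{SS}(G,\mathrm{SL_{n+1}\R})$ with a fixed line, semisimplicity provides a canonical decomposition $\R^{n+1}=W\oplus V$, where $W$ is the trivial isotypic component (nonzero by hypothesis) and $V$ is the sum of the nontrivial ones. Any splitting $W=L\oplus W'$ into a fixed line $L$ and a complementary hyperplane in $W$ identifies $\rho$ with $i_*\sigma$ for $\sigma:=\rho|_{W'\oplus V}$, which lies in $\mathrm{SL_n\R}$ because $\det\sigma=\det\rho=1$. The uniqueness of $\sigma$ modulo $\mathrm{SL_n\R}$-conjugation (both for different splittings of $W$ and, more generally, under $\mathrm{SL_{n+1}\R}$-conjugation of $\rho$) is a direct consequence of the injectivity established in the previous paragraph.

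The commutation $[i_*]\circ\pi_n=\pi_{n+1}\circ i_*$ is immediate from the definitions, and continuity of $[i_*]$ follows from continuity of $i_*$ at the representation-variety level together with the real GIT quotient topology on the character varieties. The only genuinely nontrivial point is the Krull--Schmidt step; I expect that to be the conceptual core of the argument, everything else being formal or a direct reuse of the $n$-odd rescaling trick from Lemma \ref{lmm1}.
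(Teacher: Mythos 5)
Your proof is correct but takes a genuinely different route from the paper's. The paper proves injectivity by complexifying: it observes that $\mathrm{SL}_{n+1}\C$-conjugate semisimple representations have equal characters, deduces $\chi(\rho_1)=\chi(\rho_2)$ from $\chi(i_*\rho)=\chi(\rho)+1$, invokes Lubotzky--Magid (Theorem 1.28) to obtain a complex $n\times n$ conjugating matrix, and then applies Lemma~\ref{lmm1} to replace it by a real one. You instead stay entirely over $\R$: you use the uniqueness of isotypic decomposition (Krull--Schmidt) for semisimple $\R[G]$-modules to strip off one copy of the trivial representation from each side of $i_*\rho_1\cong i_*\rho_2$ and conclude $\rho_1\cong\rho_2$ directly, then apply the $n$-odd rescaling. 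Your route is more elementary and self-contained — it avoids complexification and the Lubotzky--Magid citation, and actually does not need Lemma~\ref{lmm1} at all (only its rescaling trick). The paper's route is perhaps shorter given that Lemma~\ref{lmm1} has already been established and Lubotzky--Magid is a standard reference. One could argue your Krull--Schmidt step is mild overkill: for semisimple modules, uniqueness of the decomposition into simples (Schur/Jordan--Hölder for the semisimple category) suffices and is the precise content needed. Your treatment of the first sentence of the statement (exhibiting an actual invariant complement to a fixed line and checking $\det\sigma=1$) is also more explicit than the paper's, which leaves that part implicit. Both proofs are sound.
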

    \begin{proof}
    Continuity is clear, so we only need to check that any two representations $$i_*\rho_1,i_*\rho_2\in i_*\left(\Rep^{SS}(G,\mathrm{SL_n\R})\right)$$ which are $\mathrm{SL_{n+1}\R}$ conjugate come from $\mathrm{SL_n\R}$ conjugate representations $$\rho_1,\rho_2\in \Rep^{SS}(G,\mathrm{SL_n\R}).$$ 
    \par By considering $i_*\rho_1,i_*\rho_2$ as complex representations, we can assume that, if they are conjugate (by an $(n+1)\times(n+1)$ complex matrix), then their characters coincide. It is equivalent to say that the characters of $\rho_1$ and $\rho_2$ coincide, since $\chi(i_*\rho)=\chi(\rho)+1$ for any $\rho\in\Rep(\Gamma,\mathrm{SL_n\C})$. Thus, there is a complex $n\times n$ matrix that conjugates $\rho_1$ and $\rho_2$ (see Theorem 1.28 in Lubotzky-Magid \cite{MR818915}). Since $n$ is odd, we may apply Lemma \ref{lmm1} to conclude that there is a matrix in $\mathrm{SL_n\R}$ conjugating $\rho_1$ to $\rho_2$.
        \end{proof}
         \begin{rmk}
        Lemma \ref{lemasln} generalizes to even $n$ if we consider $\mathrm{SL}^\pm$ groups instead of $\mathrm{SL}$ groups.
    \end{rmk}
        We may improve the expression given in Proposition \ref{nonsimple} due to Lemma \ref{lemasln} as follows:
        \begin{cor}
        With the same notation as in Proposition \ref{nonsimple},
            \begin{equation}\label{eqcharavr}
            X(\Gamma_,\mathrm{SL_4\R})\cong X(\Gamma_,\mathrm{SL_3\R})\cup j_*\left(\Rep^{SS}(\Gamma_,\mathrm{SL_2\R}\times \mathrm{SL_2\R})\right)/\Slr.       \end{equation}\label{corolario}
        \end{cor}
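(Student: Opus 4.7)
The proof is essentially a direct substitution of Lemma~\ref{lemasln} into Proposition~\ref{nonsimple}. My plan is to decompose the argument into three short steps.

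First, I would invoke Proposition~\ref{nonsimple}, which already yields the decomposition
\begin{equation*}
    X(\Gamma,\Slr)=i_*\left(\Rep^{SS}(\Gamma,\mathrm{SL_3\R})\right)/\Slr\cup j_*\left(\Rep^{SS}(\Gamma,\mathrm{SL_2\R}\times \mathrm{SL_2\R})\right)/\Slr.
\end{equation*}
Thus the only thing to show is that the first piece, $i_*\left(\Rep^{SS}(\Gamma,\mathrm{SL_3\R})\right)/\Slr$, is canonically homeomorphic to $X(\Gamma,\mathrm{SL_3\R})$.

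Second, I would apply Lemma~\ref{lemasln} with $n=3$ (which is odd, so the hypothesis is met). This gives a continuous injection
\begin{equation*}
    [i_*]:X(\Gamma,\mathrm{SL_3\R})\hookrightarrow X(\Gamma,\Slr),
\end{equation*}
satisfying $[i_*]\circ\pi_3=\pi_4\circ i_*$. The image of this map is by construction exactly the set of characters in $X(\Gamma,\Slr)$ represented by a semisimple $\mathrm{SL_3\R}$-representation extended by a fixed line, i.e.\ it coincides with $i_*\left(\Rep^{SS}(\Gamma,\mathrm{SL_3\R})\right)/\Slr$. Injectivity of $[i_*]$ says that no collapsing occurs when passing from the $\mathrm{SL_3\R}$-conjugacy classes to the $\Slr$-conjugacy classes, so the identification is a bijection; together with continuity and the compatibility of the GIT quotients in both dimensions, this yields the claimed homeomorphism onto its image.

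Third, substituting this identification into the formula from Proposition~\ref{nonsimple} yields the desired expression
\begin{equation*}
    X(\Gamma,\Slr)\cong X(\Gamma,\mathrm{SL_3\R})\cup j_*\left(\Rep^{SS}(\Gamma,\mathrm{SL_2\R}\times \mathrm{SL_2\R})\right)/\Slr.
\end{equation*}
There is no genuine obstacle here: the two ingredients (Proposition~\ref{nonsimple} and Lemma~\ref{lemasln}) fit together tautologically. The only subtle point worth verifying explicitly is that, in the image of $i_*$, the stabilizer in $\Slr$ of such a representation does not produce extra identifications beyond those in $\mathrm{SL_3\R}$; but this is exactly what Lemma~\ref{lemasln} guarantees via its reduction to Lemma~\ref{lmm1}.
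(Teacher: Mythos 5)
Your proposal is correct and follows precisely the route the paper intends: the paper offers no separate proof for this corollary but presents it as an immediate consequence of substituting Lemma~\ref{lemasln} (with $n=3$) into the decomposition of Proposition~\ref{nonsimple}, which is exactly what you do. The one point you gloss slightly (that the continuous injection $[i_*]$ is a homeomorphism onto its image, not merely a continuous bijection) is likewise left implicit by the paper, so this is not a gap relative to the paper's own treatment.
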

    Now, we are able to state and show an algebraic description of $X(\Gamma,\Slr)$.
    \begin{thm}
        The character variety $X(\Gamma,\Slr)$ is a real algebraic set. It has a unique irreducible component $X_0\subset X(\Gamma,\Slr)$ of positive dimension, which is a real algebraic surface given implicitly in terms of traces of matrices by
        \begin{equation*}
              \tau^2-(rs-3)\tau+r^3+s^3-6rs+9=0.
        \end{equation*}
       $X_0$ contains exactly all the nontrivial characters that are the inclusion of simple $\mathrm{SL_3\R}$ characters plus (the inclusion of) one reducible $\mathrm{SL_3\R}$ character $\chp$. This $\chp$ is the unique singular point of $X_0$ and is the class of $\rhp$ when taking the real GIT quotient.
       Moreover, the complement of $X_0$ consists of $19$ isolated points having the following representatives:
      \begin{enumerate}
      \item $\rho(a^2b)=R\oplus R$, $\rho(a)\in\{\id\oplus\id,\id\oplus R^{\pm 1},R^{\pm 1}\oplus R^{\pm 1},R\oplus R^{-1}\}$,
            \item $\rho(a^2b)=R\oplus R^{-1}$, $\rho(a)\in\left\{\id\oplus\id,\id\oplus R^{\pm 1},R^{\pm 1}\oplus\id,\left(R\oplus R\right)^\pm,\left(R\oplus R^{-1}\right)^\pm\right\}$,
            \item $\rho(a^2b)=R\oplus \id$, $\rho(a)\in\{R^{\pm1}\oplus\id,\id\oplus\id\}$,
            \item $\rho(a^2b)=\id$, $\rho(a)=\id$,
        \end{enumerate}
        where $R$ is the rotation of angle $\frac{2\pi}{3}$.\label{charvar}
    \end{thm}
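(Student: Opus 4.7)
My plan is to combine the splitting from Corollary~\ref{corolario}, which expresses $X(\Gamma,\Slr)$ as the union of the image of $X(\Gamma,\mathrm{SL_3\R})$ (via Lemma~\ref{lemasln}) and of $j_*\bigl(\Rep^{SS}(\Gamma,\mathrm{SL_2\R}\times\mathrm{SL_2\R})\bigr)/\Slr$, with Lawton's explicit description of $X(F_2,\mathrm{SL_3\C})$ for the free group $F_2=\langle a,b\rangle$ from \cite{MR2329569}. The positive-dimensional component $X_0$ should come entirely from $X(\Gamma,\mathrm{SL_3\R})$, while the $\mathrm{SL_2\R}\times\mathrm{SL_2\R}$ side, together with the trivial character, contributes the isolated points after the overlap with $X_0$ is removed.

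For the positive-dimensional part, Lawton realizes $X(F_2,\mathrm{SL_3\C})$ as a hypersurface in $\C^9$ cut out by a single explicit polynomial in nine trace coordinates. The orbifold relations $a^3=b^3=(ab)^3=\id$ force the characteristic polynomial of each of $\rho(a),\rho(b),\rho(ab)$ to divide $x^3-1$, so for non-identity images $\mathrm{tr}(\rho(a))=\mathrm{tr}(\rho(a^{-1}))=\mathrm{tr}(\rho(b))=\mathrm{tr}(\rho(b^{-1}))=\mathrm{tr}(\rho(ab))=\mathrm{tr}(\rho((ab)^{-1}))=0$. This collapses six of Lawton's nine coordinates, leaving a hypersurface in $\C^3$ whose remaining coordinates are three trace functions $r,s,\tau$ on canonical elements of $\Gamma$ (for instance $a^2b$, $(a^2b)^{-1}$, and a length-two word or commutator). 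Substitution and simplification using standard $\mathrm{SL_3}$ trace identities (Cayley--Hamilton and $\mathrm{tr}(A^{-1})=\tfrac12(\mathrm{tr}(A)^2-\mathrm{tr}(A^2))$) should reduce Lawton's relation to the stated equation $\tau^2-(rs-3)\tau+r^3+s^3-6rs+9=0$, and Lemma~\ref{lmm1} controls the passage to the real character variety. I would cross-check the equation by evaluating $r,s,\tau$ along the slice $\mathcal{S}_{\rhp}$ of Theorem~\ref{main}, which by Proposition~\ref{tangent} sweeps out a neighbourhood of $\chp$ in the two-dimensional $X_0$.

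For the singular locus, the discriminant of the quadratic in $\tau$, namely $(rs-3)^2-4(r^3+s^3-6rs+9)$, vanishes exactly at $(r,s)=(3,3)$, forcing $\tau=3$; matching this value against the traces of the $\mathrm{SL_3\R}$-reduction of $\rhp$ (where $\rhp|_{\Gamma_0}$ is unipotent by \eqref{g1}) identifies the singular point with $\chp$. The isolated points arise by enumeration: a non-identity order-$3$ element of $\mathrm{SL_2\R}$ is conjugate to the rotation $R$ of angle $2\pi/3$ or to $R^{-1}$, so in each factor of $\mathrm{SL_2\R}\times\mathrm{SL_2\R}$ the images $\rho(a),\rho(b),\rho(ab)$ take one of finitely many block forms. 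An exhaustive case analysis under the orbifold relations, modulo $\Slr$-conjugacy (which in particular allows swapping the two 2-dimensional invariant planes), and after removing those characters admitting an additional invariant line (hence already sitting on $X_0$ by Lemma~\ref{lemasln}), leaves the $19$ listed representatives.

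The principal obstacle is the algebraic reduction in the second paragraph: deriving the compact cubic surface equation in three variables from Lawton's nine-variable polynomial relation together with the order-$3$ constraints. This is in principle a direct calculation but is lengthy and error-prone, so I would carry it out with computer algebra assistance and cross-verify the outcome against the explicit slice parametrization of Theorem~\ref{main}. A secondary difficulty lies in the bookkeeping required to confirm the precise count of $19$ isolated points, since distinct block-form representatives may yield $\Slr$-conjugate characters, or may already appear as $\mathrm{SL_3\R}$ characters lying on $X_0$, and both types of redundancy must be accounted for.
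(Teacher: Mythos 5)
Your overall roadmap matches the paper's: decompose $X(\Gamma,\Slr)$ via Corollary~\ref{corolario} into an $\mathrm{SL_3\R}$ piece and an $\mathrm{SL_2\R}\times\mathrm{SL_2\R}$ piece, feed the $\mathrm{SL_3}$ piece through Lawton's $X(F_2,\mathrm{SL_3\C})$ hypersurface, kill six of the nine trace coordinates with the order-$3$ relations, and enumerate the leftover isolated points. However, there are two genuine gaps.

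First, the discriminant argument for the singular locus is not correct. The discriminant $\Delta(r,s)=(rs-3)^2-4(r^3+s^3-6rs+9)=r^2s^2+18rs-4r^3-4s^3-27$ does not vanish only at $(3,3)$; it vanishes along a plane curve (e.g.\ at $(0,-(27/4)^{1/3})$), and in any case the vanishing of $\Delta$ identifies the \emph{branch locus} of the projection $(r,s,\tau)\mapsto(r,s)$, not the singular locus of the surface. A point is singular only if $p$ together with \emph{all three} partials $p_r,p_s,p_\tau$ vanish; solving that system (using $p_\tau=0$ to write $\tau=(rs-3)/2$, then substituting into $p_r=0$ and $p_s=0$) is what isolates $(3,3,3)$. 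Your argument would accept any branch point as singular, which is false.

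Second, "Lemma~\ref{lmm1} controls the passage to the real character variety" is not enough. Lemma~\ref{lmm1} gives an \emph{injection} $X(\Gamma,\mathrm{SL_3\R})\hookrightarrow X(\Gamma,\mathrm{SL_3\C})$; it does not say that every real-valued point of the complex hypersurface $X_{0,\C}$ is the character of an $\mathrm{SL_3\R}$ representation. That surjectivity statement is the crux here, and the paper handles it by invoking Acosta's theorem \cite{MR4027594} for \emph{simple} representations (real character implies real representative), and then separately showing that the only reducible character on $X_{0,\C}$ with real coordinates is $(3,3,3)$, represented by the real representation $\rho_{red}(a^2b)=\id$, $\rho_{red}(a)=1\oplus R$. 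Without this, it is not established that $X_0=\Re(X_{0,\C})$. Relatedly, your identification of the singular point with $\chp$ is too quick: $\rhp$ is not itself a semisimple $\mathrm{SL_3\R}$ representation, so one must exhibit a semisimplification lying in $i(\mathrm{SL_3\R})$ (the paper degenerates $\rhp$ to $\rho_{red}$ by the explicit family $C_x$) before reading off the coordinates $(3,3,3)$.

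The rest — the trace-vanishing reduction to three coordinates, and the case analysis yielding the $15+4=19$ isolated points modulo plane swaps and overlaps with $X_0$ — is the same computation the paper carries out, and your plan for it is sound.
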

    \begin{proof}
    \par The fact that the character variety $X(\Gamma_,\mathrm{SL_4\R})$ is real algebraic follows from the description of it given in the statement. Thus, we shall prove that this description holds.
    \par Recall the description given by equation (\ref{eqcharavr}) in Corollary \ref{corolario}. We shall deal with both subsets $X(\Gamma_,\mathrm{SL_3\R})$ and $j_*\left(\Rep^{SS}(\Gamma_,\mathrm{SL_2\R}\times \mathrm{SL_2\R})\right)/\Slr$  separately.
    \par By Lemma \ref{lmm1}, we may analyze $X(\Gamma_,\mathrm{SL_3\R})$ by means of $\Re\left(X(\Gamma_,\mathrm{SL_3\C})\right)$. We know from \cite{MR2329569} that the $\mathrm{SL_3\C}$ character variety of the free group $F_2$ in two generators is a complex hypersurface of $\C^9$ given in terms of the traces of the products of the generators and its inverses, and their commutator. Regarding $F_2$ as an extension of $\Gamma$, we get nine complex coordinates for the ambient space of the $\mathrm{SL_3\C}$ character variety of $\Gamma$, namely:
    \begin{align*}
     x&:= \chi(a), & y&:=\chi(b), & z&:= \chi(ab),  & r&:= \chi(ab^{-1}), \qquad \tau:=\chi(aba^{-1}b^{-1}),\\
         u&:= \chi(a^{-1}), & v&:= \chi(b^{-1}), & w&:= \chi((ab)^{-1}), & s&:= \chi(a^{-1}b).
    \end{align*}
    A direct analysis yields four isolated points, corresponding to equivalence classes satisfying one of the following:
    \begin{itemize}
        \item $\rho(a)=\id,\rho(b)\neq \id$;
        \item $\rho(a)\neq \id,\rho(b)= \id$;
        \item $\rho(a)\neq \id,\rho(b)\neq\id,\rho(ab)=\id$;
        \item $\rho(a)=\id,\rho(b)=\id$.
    \end{itemize}      
    These four characters correspond to Case 3. and Case 4. in the statement of the theorem. 
    \par Now, let us analyze semisimple representations $\rho$ (with character $\chi$) such that $\rho(a),\rho(b),\rho(ab)$ are all nontrivial. Then, we get
    \begin{align*}
       & x=0,\quad y=0, \quad z=0, \\
        & u=0,\quad v=0, \quad w=0.
    \end{align*}
    Consequently, the corresponding characters lie on the algebraic subset $X_{0,\C}\subset \C^3$ given implicitly by 
    \begin{equation}\label{eqcompleja}
        \tau^2-(rs-3)\tau+r^3+s^3-6rs+9=0.
    \end{equation}
     Now, we need to distinguish the classes of real representations in $X_{0,\C}$. A necessary condition for $\rho$ to be real (maybe after a conjugation) is $\chi$ being real (note that any image of $\chi$ is a polynomial on the coordinates $r,s,\tau$, so $\chi\in\Re(X_{0,\C})$ reduces to $r,s,\tau\in\R$). In fact, Acosta proves in \cite{MR4027594} that $\chi$ being real is also sufficient to $\rho$ being real (up to conjugacy) if $\rho$ is simple. We claim that having real character is actually sufficient for any semisimple representation in our case, i.e. $X_0=\Re(X_{0,\C})$. 
    \par Let $\rho$ be semisimple and reducible, so it fixes a line. Thus, $\rho(a)$ cannot act as a permutation on the three invariant lines of $\rho(a^2b)$.
    Therefore, $\rho(a)$ acts trivially on $\mathrm{Spec}\left(\rho(a^2b)\right)$. Denote by $m\in\{1,2,3\}$ the number of different eigenvalues of $\rho(a^2b)$. Since $\rho(a)\neq \id$, then $m<3$. If $m=2$, semisimple reducible representations are given by
    $$\rho(a^2b)=\lambda\id\oplus \lambda^{-2},\quad \rho(a)=\mu R^{\pm1}\oplus \mu^{-2}$$
    where $\lambda,\mu$ are (complex) cubic roots of $1$, and $R$ is the rotation of angle $\frac{2\pi}{3}$. Hence, $\lambda^{-2}=\lambda$ and $\mu^{-2}=\mu$. However, the fact that $\chi(a^2b)=s$ is real implies that $\lambda=1$, which is absurd (it implies $m=1$). 
    Thus, $m=1$, so $\rho(a^2b)=\id$. We can conjugate $\rho(a)$ to be the permutation matrix of order three. Hence, the unique reducible character is represented by 
    $$\rho_{red}(a^2b)=\id,\rho_{red}(a)=1\oplus R,$$
    where $R$ is the rotation matrix of angle $\frac{2\pi}{3}$. This representation is real, so the claim is proved.
    \par We get also from the previous paragraph that there is just one reducible character $\chi_{red}\in X_0$, which has coordinates $(3,3,3)$. Direct calculations show the unique singular point of the real polynomial $$p(r,s,\tau)=\tau^2-(rs-3)\tau+r^3+s^3-6rs+9$$
    lying on $\Re(X_{0,\C})$ is precisely $(3,3,3)=\chi_{red}$.
    It remains to show that $\chi_{red}$ corresponds to $\rhp$, i.e., $i(\rho_{red})$ lies on the closure of the conjugacy orbit of $\rhp$, where $i:\mathrm{SL_3\R}\to\Slr$ is the natural inclusion. Let us define the family
     \begin{equation*}
         \left\{C_x:=x\oplus\id\oplus \frac{1}{x}\in \Slr: x>0\right\}.
     \end{equation*}
     Conjugating $\rhp$ by $C_x$ and then taking the limit as $x\to 0^+$, we obtain
     \begin{equation*}
         i\left(\rho_{red}(a^2b)\right)=\id=\lim_{x\to 0^+} C_x\rhp(a^2b) C_x^{-1}.
     \end{equation*}
     Further, $C_x$ leaves invariant $\rhp(a)=1\oplus R\oplus 1=i(\rho_{red})$. Therefore,
     \begin{equation*}
         \rho_{red}=\lim_{x\to 0^+} C_x\rhp C_x^{-1}.
     \end{equation*}
    \par Finally, we face to all the points of $X(\Gamma,\Slr)$ not coming from a $\mathrm{SL_3\R}$ character (so lying on $\Rep^{SS}(\Gamma_,\mathrm{SL_2\R}\times \mathrm{SL_2\R})/\Slr$ as said in Equation \ref{eqcharavr}). From the relations in $\Gamma$ we deduce that any representative $\rho$ of such characters must satisfy $\left(\rho(a^2b)\right)^3=\id$. Thus, $\rho$ splits in $\rho_1\oplus\rho_2$ where $\rho_1(\gamma),\rho_2(\gamma)$ are $\pm\frac{2\pi}{3}$ rotation matrices or $\id$, for any $\gamma\in\Gamma$. By a case-by-case analysis we obtain the $15$ isolated points written in 1. and 2. at the statement.
 \end{proof}
 \begin{prp}\label{4.9}
     No representation whose character lies in $X(\Gamma,\Slr)\setminus X_0$ is faithful, so the geometrically relevant component of the character variety is $X_0$.
 \end{prp}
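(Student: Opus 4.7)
The plan is to combine the explicit enumeration of Theorem \ref{charvar} with an infinitesimal rigidity argument at the 19 isolated characters.

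Each of the 19 listed representatives $\rho_{ss}$ has image in the finite abelian subgroup $\{M\in\mathrm{SO}(2)\times\mathrm{SO}(2):M^3=\id\}\cong\Z_3\oplus\Z_3$ of $\Slr$, since every listed matrix is block-diagonal with rotation blocks of order dividing three, and such blocks commute. Consequently each $\rho_{ss}$ factors through $\Gamma^{\mathrm{ab}}\cong\Z_3\oplus\Z_3$, so its kernel contains $[\Gamma,\Gamma]$, which has index nine in the infinite group $\Gamma$ (and in particular contains the rank-two sublattice $\langle (a^2b)^3,(ba^2)^3\rangle$ of the torsion-free $\Gamma_0\cong\Z^2$). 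Thus the 19 semisimple representatives are manifestly non-faithful.

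For a general $\rho$ in the fiber of one of these isolated characters, the strategy is to argue that $\rho$ is $\Slr$-conjugate to the corresponding $\rho_{ss}$, so that the previous paragraph forces $\rho(\Gamma)$ to be finite and $\rho$ non-faithful. The key is infinitesimal rigidity at the isolated characters: vanishing of $H^1(\Gamma,\mathrm{Ad}\,\rho_{ss})$, which can be verified by a twisted Euler characteristic calculation in the spirit of Lemma \ref{lema}, using that the composition factors of $\mathrm{Ad}\,\rho_{ss}$ are characters of the finite group $\Gamma^{\mathrm{ab}}$ with small invariants under $\Gamma_0$. Rigidity makes the $\Slr$-orbit of $\rho_{ss}$ open in $\Rep(\Gamma,\Slr)$; closedness of the orbit (from semisimplicity) then makes it a union of connected components, which by GIT coincides with the whole fiber over $\chi_{\rho_{ss}}$. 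Lemma \ref{lmm1} upgrades the resulting $\mathrm{SL_4\C}$-conjugation to $\Slr$.

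The principal obstacle is carrying out the cohomology vanishing at each of the 19 representations rigorously, since the composition factors depend delicately on the subcase. As a direct case-by-case alternative that sidesteps the cohomological computation, one observes that whenever $\chi(w)=4$ for some torsion $w\in\{a,b,ab\}$, the matrix $\rho(w)$ is both of order dividing three (hence diagonalizable, since $x^3-1$ has distinct roots) and unipotent (since all its eigenvalues must equal one), so $\rho(w)=\id$; then $\rho$ descends to a proper finite quotient of $\Gamma$. This handles Case 4 immediately and most subcases of Cases 1--3. The residual subcases, in which $\chi(a),\chi(b),\chi(ab)\neq 4$, are treated by exploiting $\chi([a,b])=4$ (forced by abelianness of $\rho_{ss}$) together with the semidirect structure $\Gamma=\Gamma_0\rtimes\Z_3$ to force $\rho$ to be abelian, hence to factor through $\Gamma^{\mathrm{ab}}$ with finite image.
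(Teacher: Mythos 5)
Your proposal takes a genuinely different route from the paper (which uses the orbit-closure structure from real GIT à la B\"ohm--Lafuente rather than a cohomological rigidity argument), but both of your routes have gaps that are not merely technical.

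\textbf{The rigidity route fails.} The assertion that $H^1(\Gamma,\Ad\rho_{ss})=0$ at all $19$ isolated characters is not just unverified---it is false at several of them, precisely in the subcases your alternative argument also cannot handle. Take Case 1 with $\rho_{ss}(a^2b)=R\oplus R$ and $\rho_{ss}(a)=R\oplus R^{-1}$, so $\rho_{ss}(b)=R^2\oplus\id$ and $\rho_{ss}(ab)=\id\oplus R^{-1}$. A direct computation gives
\[
\dim\bigl(\slr\bigr)^{\Ad(R\oplus R^{-1})}=7,\qquad
\dim\bigl(\slr\bigr)^{\Ad(R^2\oplus\id)}=\dim\bigl(\slr\bigr)^{\Ad(\id\oplus R^{-1})}=5,
\]
so $\tilde\chi(S^2(3,3,3),\Ad\rho_{ss})=-15+7+5+5=2$, while $H^0(\Gamma,\Ad\rho_{ss})$ (block-diagonal traceless matrices whose $2\times2$ blocks each commute with $R$) is $3$-dimensional. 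Poincar\'e duality then forces $H^1=2H^0-\tilde\chi=4\neq0$. The orbit of $\rho_{ss}$ is therefore not open in $\Rep(\Gamma,\Slr)$, and the fiber of the GIT quotient over this isolated character contains non-semisimple representations outside the orbit; your conclusion that the fiber is a single closed orbit does not follow. (The isolatedness of the character in $X$ is compatible with $H^1\neq0$ here exactly because the extra tangent directions stay inside the fiber.)

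\textbf{The case-by-case alternative also has a gap in the residual subcases.} The observation that $\chi(w)=4$ for torsion $w\in\{a,b,ab\}$ forces $\rho(w)=\id$ (diagonalizability from $w^3=\id$ plus all eigenvalues equal to $1$) is correct and matches the paper's Lemma~\ref{lm4.9}(1). But the step you use to close the residual subcases---arguing from $\chi([a,b])=4$ that $\rho$ must be abelian---is invalid, because $[a,b]$ has \emph{infinite} order in $\Gamma$. There is no minimal polynomial with distinct roots forcing $\rho'([a,b])$ to be diagonalizable; the most that $\chi\bigl([a,b]^n\bigr)=4$ for all $n$ gives (via Newton's identities) is that $\rho'([a,b])$ is unipotent, and a nontrivial unipotent $\rho'([a,b])$ is exactly what one would expect from a faithful representation in these fibers. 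So the residual subcases are not handled, and these are precisely the subcases where your cohomological route also breaks down. The paper instead disposes of them by exploiting the orbit-closure constraint on $\rho'(a)$ and $\rho'(b)$ (both semisimple of order $3$, hence confined to the $\Slr$-conjugacy classes of $\rho_{ss}(a)$, $\rho_{ss}(b)$) together with the incompatibility this imposes on $\rho'(a^2b)=\rho'(a)^{-1}\rho'(b)$ being of infinite order---a structural argument about invariant plane decompositions, not a trace computation.

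Your first paragraph (the $19$ listed representatives factor through $\Gamma^{\mathrm{ab}}\cong\Z_3\oplus\Z_3$ and so are non-faithful) is correct but is only the easy half of the proposition; the substance lies entirely in controlling the non-semisimple representations in the fibers, and that is where both of your approaches come apart.
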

    \begin{proof}
    Let us recall very briefly what we need from Section 2 of Böhm-Lafuente \cite{MR4420784}. Let $M\in\Slr$, and let $\Slr\cdot M$ the conjugation orbit of $M$. Then, the orbit $\Slr\cdot M$ is closed if and only if $M$ is semisimple, and in any case its closure $\overline{\Slr\cdot M}$ contains exactly a closed orbit. Moreover, if $M$ is semisimple, then any $M'\in\overline{\Slr\cdot M}$ has Jordan form $J'=M+N$, where $N$ is a nilpotent element of $\left(\slr\right)^{\Ad(M)}\cong H^0(<M>,\Ad)$. Thus, $M'$ is semisimple if and only if $N=0$, and the spectra of $M$ and $M'$ coincide. Now, the following lemma is clear:
    \begin{lmm}  \label{lm4.9} Let $\rho:\Gamma\to \Slr$ be a representation, and let $\gamma\in \Gamma$ be an arbitrary element of order three. Then:
    \begin{enumerate}
        \item If $\rho(\gamma)=\id$, then $\overline{\Slr\cdot\rho}$ does not contain any faithful representation.
        \item   If $\rho(\gamma)$ has order three, then any $\rho'\in \overline{\Slr\cdot\rho}$ satisfies $\rho'(\gamma)\in \Slr \cdot \rho(\gamma)$. 
    \end{enumerate}
    \end{lmm}
    Now, to be more readable, we repeat the list given in Theorem \ref{charvar} of $19$ semisimple representations corresponding to the $19$ isolated points that $X(\Gamma,\Slr)\setminus X_0$ consists of:
     \begin{enumerate}
      \item $\rho(a^2b)=R\oplus R$, $\rho(a)\in\{\id\oplus\id,\id\oplus R^{\pm 1},R^{\pm 1}\oplus R^{\pm 1},R\oplus R^{-1}\}$,
            \item $\rho(a^2b)=R\oplus R^{-1}$, $\rho(a)\in\{\id\oplus\id,\id\oplus R^{\pm 1},R^{\pm 1}\oplus\id,R^{\pm 1}\oplus R^{\pm 1},R^{\pm 1}\oplus R^{\mp 1}\}$,
            \item $\rho(a^2b)=R\oplus \id$, $\rho(a)\in\{R^{\pm1}\oplus\id,\id\oplus\id\}$,
            \item $\rho(a^2b)=\id$, $\rho(a)=\id$,
        \end{enumerate}
        where $R$ is the rotation of angle $\frac{2\pi}{3}$. It is clear that none of these representations is faithful. However, we also need to show that a faithful representation $\rho'$ cannot contain any $\rho$ in the list as a point of its orbit's closure. We shall proceed by elimination, that is, we take a faithful $\rho'$ and prove that no $\rho$ in the list fits.
    \par Using Lemma \ref{lm4.9}(1), we may discard any $\rho$ that lies in Cases 3. and 4. Thus, we assume that $\rho$ lies in Cases 1. or 2. Then, $\rho(a^2b)$ is a direct sum of rotations of order three. Since $a^2b$ has infinite and $a,b$ have order three, we may conclude that:
    \begin{itemize}
        \item  $\rho'(a^2b)$ does not respect the decomposition in two transverse planes $\pi_1\oplus\pi_2$ that $\rho$ respects;
        \item By Lemma \ref{lm4.9}(2), $\rho'(a)=\rho(a)$ and $\rho'(b)=\rho(b)$.
    \end{itemize}
    Since $\rho'(b)=\rho'(a)\rho'(a^2b)$, the two conditions above cannot be satisfied simultaneously, that is, we have eliminated all the candidates $\rho$ in the list. Therefore, the fibers of the real GIT quotient projection of the elements in the list contain only unfaithful representations.
    \end{proof}
    Now, we want to describe the topology of the irreducible component $X_0$. To do this, let us define $\tilde{X}_0<\Rep(\Gamma,\Slr)$ to be the smooth algebraic surface isomorphic to $\{x_1x_2x_3=1\}< \R^3$ whose points $\rho_{(x_1,x_2,x_3)}$ are given by 
\begin{equation*}
    \rho_{(x_1,x_2,x_3)}(a^2b)=i(x_1\oplus x_2\oplus x_3),\quad \rho_{(x_1,x_2,x_3)}(a)=i(P_3),
\end{equation*}
where $i:\mathrm{SL_3\R}\hookrightarrow\Slr$ is the natural inclusion and $P_3$ is the permutation matrix of order three. Notice that $\tilde{X}_0<\Rep^{SS}(\Gamma,\Slr)$. Further, there is a natural action of the symmetric group $\Sigma_3$ on $\tilde{X}_0$ induced by the permutation of the set $\{x_1,x_2,x_3\}$, i.e. the action identifies
$$\rho_{(x_1,x_2,x_3)}\equiv \rho_{(x_3,x_1,x_2)}\equiv \rho_{(x_2,x_3,x_1)}.$$
Moreover, this action preserves a unique connected component $\tilde{X}_0^+$ of $\tilde{X}_0$ given by  $$\tilde{X}_0^+:=\{\rho_{(x_1,x_2,x_3)}:x_1,x_2,x_3>0\}.$$
Using these ingredients, we get the following corollary: 
 \begin{cor}\label{generalization}
  There is an algebraic isomorphism $X_0\cong \tilde{X}_0/\Sigma_3$. Therefore, $X_0$ topologically splits into two connected components $S_1\sqcup S_2$ defined by
\begin{align*}
&S_1:=\tilde{X}_0^+/\Sigma_3, \\      &S_2:=\left(\tilde{X}_0\setminus\tilde{X}_0^+\right)/\Sigma_3.
\end{align*}
Both $S_1,S_2$ are homeomorphic to $\R^2$. Moreover, $S_1$ has an orbifold structure with a sole branching point $\chp$ of order three, while $S_2$ is smooth.
\end{cor}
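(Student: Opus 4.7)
The plan is to exhibit an explicit algebraic map $\Phi : \tilde{X}_0 \to X_0$ that realizes $\rho_{(x_1, x_2, x_3)}$ as its character and then to identify its fibers with the orbits of the $\Sigma_3$-action defined above. First, writing $D = \mathrm{diag}(x_1, x_2, x_3)$ and using that $b = a\cdot a^2b = P_3 D$, direct trace computations give, subject to $x_1 x_2 x_3 = 1$,
\begin{equation*}
    s = x_1+x_2+x_3, \qquad r = x_1 x_2 + x_1 x_3 + x_2 x_3, \qquad \tau = \tfrac{x_2}{x_3} + \tfrac{x_3}{x_1} + \tfrac{x_1}{x_2},
\end{equation*}
and an elementary verification confirms $p(r,s,\tau)=0$. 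This defines $\Phi$ as a morphism into the hypersurface $X_0$.

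Next I would use Theorem~\ref{charvar} to check both surjectivity and the fiber structure. Every character in $X_0$ is either a simple $\mathrm{SL}_3\R$-character or $\chp$. For the simple case, $\rho(a^2b)$ must be diagonalisable with three distinct eigenvalues (otherwise $\rho$ would reduce further than allowed), and $\rho(a)$, being of order three, necessarily cyclically permutes the corresponding eigenlines; conjugating inside $\Slr$ one reaches the normal form $\rho_{(x_1, x_2, x_3)}$, with the only remaining freedom being a cyclic relabelling of the three eigenvalues. For the reducible character, the family $C_x = x \oplus \mathrm{id} \oplus 1/x$ already used in the proof of Theorem~\ref{charvar} shows that $\rho_{(1,1,1)}$ lies in the closure of the conjugation orbit of $\rhp$, so $\Phi(1,1,1) = \chp$; moreover $(1,1,1)$ is the unique fixed point of the action, as $x_1 = x_2 = x_3$ combined with $x_1 x_2 x_3 = 1$ forces $x_i = 1$. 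Since $\Phi$ is surjective, algebraic, and has the prescribed orbits as its fibers, it descends to an algebraic isomorphism $\tilde{X}_0/\Sigma_3 \xrightarrow{\sim} X_0$.

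The topology follows by direct inspection. The surface $\tilde{X}_0\cong\{x_1 x_2 x_3 = 1\}\subset\R^3$ has four connected components corresponding to the sign patterns $(+,+,+)$ and the three two-minus configurations, each diffeomorphic to $\R^2$. On $\tilde{X}_0^+$, the logarithmic chart $(x_1, x_2)\mapsto(\log x_1, \log x_2)$ identifies it with $\R^2$, and the generator of the cyclic action becomes an $\R$-linear map with characteristic polynomial $\lambda^2+\lambda+1$, i.e.\ a rotation of order three about the image of $(1,1,1)$. Hence $S_1 = \tilde{X}_0^+/\Sigma_3$ is homeomorphic to $\R^2$ and carries an orbifold structure with a single conical point of order three at $\chp$. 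The three non-positive components contain no fixed points and are freely and transitively permuted by the action, so $S_2 = (\tilde{X}_0 \setminus \tilde{X}_0^+)/\Sigma_3$ is smoothly homeomorphic to any one of them, hence to $\R^2$. The main obstacle I expect lies in the second paragraph: rigorously verifying that every simple character admits a representative in the normal form $\rho_{(x_1, x_2, x_3)}$ with conjugating element in $\Slr$ (not merely in $\mathrm{GL}_4\R$), and confirming that the closure behaviour at $\chp$ matches the real GIT quotient so that the algebraic isomorphism is genuine rather than merely generically so.
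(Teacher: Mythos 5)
Your route is genuinely different from the paper's. The paper first observes that $\tilde{X}_0$ contains a semisimple representative of every point of $X_0$ (by Theorem~\ref{charvar} the family $\rho_{(x_1,x_2,x_3)}$ sweeps out the inclusions of nontrivial simple $\mathrm{SL_3}\R$ characters together with the reducible $\chp$), so that $X_0=\tilde{X}_0/\Slr$ as sets; it then decides when two points of $\tilde{X}_0$ are conjugate, and finally treats the topology. You instead write down the explicit algebraic map $\Phi$ in the $(r,s,\tau)$-coordinates and inspect its fibers directly. Your formulas are correct, and the one for $\tau$ is in fact illuminating: $\tau = x_2/x_3 + x_3/x_1 + x_1/x_2$ is invariant under cyclic permutations of $(x_1,x_2,x_3)$, but a transposition sends it to the other root $\tau'=x_1/x_3 + x_3/x_2 + x_2/x_1$ of the quadratic $p(r,s,\cdot)=0$, and for generic $x_i$ one has $\tau\neq\tau'$. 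Hence $\rho_{(x_2,x_1,x_3)}$ and $\rho_{(x_1,x_2,x_3)}$ have different characters and are \emph{not} conjugate. This contradicts the paper's proof, which asserts conjugacy whenever $\{x_1,x_2,x_3\}=\{x_1',x_2',x_3'\}$ as sets; the correct statement is that the fibers of $\pi$ over $X_0$ are cyclic orbits, exactly as your "cyclic relabelling" and your log-chart rotation with characteristic polynomial $\lambda^2+\lambda+1$ say. If one literally quotiented by the symmetric group of order six, $\tilde{X}_0^+/\Sigma_3$ would be a wedge with reflector boundary and corner of order six, and $S_2$ would be a half-plane — neither matches the corollary's conclusions. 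So the group one should quotient by is the cyclic group of order three, and under that reading your argument is sound; but you should reconcile this with the notation, since after arguing for a $\Z/3$ fiber you nevertheless write $\tilde{X}_0/\Sigma_3$ in the conclusion.

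The two residual worries you flag are reasonable but are not obstacles. For the conjugating element: the centralizer of a generic $\rho_{(x_1,x_2,x_3)}$ in $\mathrm{GL_4}\R$ contains the elements $\mathrm{diag}(t,t,t,u)$, whose determinant $t^3u$ takes either sign, so any $\mathrm{GL_4}\R$-conjugacy can be corrected to an $\Slr$-conjugacy; alternatively, conjugacy inside $\mathrm{SL_3}\R$ (odd size) suffices before applying the inclusion $i$. For the behaviour at $\chp$: the $C_x$-limit computation you describe is precisely the one already carried out in the proof of Theorem~\ref{charvar}, identifying $\rho_{(1,1,1)}$ as the semisimple point in the closure of the $\rhp$-orbit, so you may simply cite it rather than reprove it.
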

\begin{proof}
    First, notice that $\tilde{X}_0\setminus\{\rho_{(1,1,1)}\}$ is the set of all semisimple representations which are the inclusion of nontrivial simple $\mathrm{SL_3\R}$ representations of $\Gamma$. Further, $\rho_{(1,1,1)}$ is a semisimple representative of $\chp$. Thus, we identify $X_0=\tilde{X}_0/\Slr$. 
    \par Secondly, note that $\rho_{(x_1,x_2,x_3)}$ is conjugate to $\rho_{(x_1',x_2',x_3')}$ if and only if 
    $$\{x_1,x_2,x_3\}=\{x_1',x_2',x_3'\}$$
    as sets. Thus, $\tilde{X}_0/\Sigma_3=\tilde{X}_0/\Slr=X_0$.
    \par Finally, we see that $\tilde{X_0}$ has four diffeomorphic (and diffeomorphic to $\R^2$) connected components:
    \begin{align*}
        & \tilde{X}_0^+=\left\{\rho_{(x_1,x_2,x_3)}\in \tilde{X}_0: x_1,x_2,x_3>0\right\},\\
        & \tilde{X}_0^1:= \left\{\rho_{(x_1,x_2,x_3)}\in \tilde{X}_0: x_1>0;x_2,x_3<0\right\},\\
        & \tilde{X}_0^2:=\left\{\rho_{(x_1,x_2,x_3)}\in \tilde{X}_0: x_2>0;x_1,x_3<0\right\}, \\
        & \tilde{X}_0^3:=\left\{\rho_{(x_1,x_2,x_3)}\in \tilde{X}_0: x_3>0;x_1,x_2<0\right\}.\\
    \end{align*}
    The symmetric group acts properly on $\tilde{X}_0^+$ and $\tilde{X}_0^1\sqcup\tilde{X}_0^2\sqcup\tilde{X}_0^3$ to get $S_1$ and $S_2$ (respectively). Thus, $S_1,S_2$ are both homeomorphic to $\R^2$. In the case of $\tilde{X}_0^+/\Sigma_3$, the action is not free at $\rho_{(1,1,1)}$, which is a branching point of order three, but it is free outside that point. This endows $S_1$ with a natural orbifold structure as in the statement. In the case of $\left(\tilde{X}_0^1\sqcup\tilde{X}_0^2\sqcup\tilde{X}_0^3\right)/\Sigma_3$, the action is free and identifies the three connected components, so $S_2$ is diffeomorphic to any $\tilde{X}_0^i$ (for $i=1,2,3$). Hence, $S_2$ is smooth. 
\end{proof}
Now, we describe the role of $\mathcal{S}_{\rhp}$, the slice around $\rhp$ given in Theorem \ref{main}, in the global structure of $\Rep(\Gamma,\Slr)$:
\begin{rmk}
    Let $\pi:\Rep(\Gamma,\Slr)\to X(\Gamma,\Slr) $ the (real GIT) quotient projection. Then, $$\pi(\mathcal{S}_{\rhp})=\pi(\tilde{X}_0^+).$$ Thus, $\pi|_{\mathcal{S}_{\rhp}}$ is a surjection onto $S_1$ which is 3-to-1 generically and 1-to-1 at the hyperbolic holonomy $\rhp$.
\end{rmk}
\begin{rmk}
    The topological description of $X_0$ given in Corollary \ref{generalization} can also be obtained in terms of the coordinates $r,s,\tau$ (see Theorem \ref{charvar}). Direct calculations yield
     \begin{align*}
         &S_1=X_0\cap \{r\geq 3,s\geq 3\},\\
         &S_2=X_0\setminus S_1.
     \end{align*}
\end{rmk}
\begin{rmk}
    As we said in Remark \ref{extra}, Ballas, Cooper and Leitner affirm in \cite{MR4125754} that generalized cusps are all deformations of hyperbolic cusps. Theorem \ref{thm} provides a converse for that statement in our case. Now, the disconnectedness $S_1\sqcup S_2$ of the algebraic component $X_0$ of all possibly geometric representations (see Proposition \ref{4.9}) obtained in Corollary \ref{generalization} can be interpreted as a global converse of Ballas-Cooper-Leitner's statement, since no representation in $S_2$ can be the holonomy of a generalized cusp of any type.
\end{rmk}
Finally, Corollary \ref{generalization} allows us to upgrade Theorem \ref{thm} to a global statement, not only concerning deformations but \textit{paths} of deformations:
\begin{cor}
    Let $\Orb^3$ be a topologically finite hyperbolic 3-orbifold whose ends have turnover cross section. Then, any path of projective deformations of $\Orb^3$ produces a properly convex projective 3-orbifold whose ends are either hyperbolic cusps or totally geodesic.
\end{cor}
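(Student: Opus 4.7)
The plan is to run an open-closed argument on the parameter interval. Given a path of projective deformations $\rho_t$, $t\in[0,1]$, with $\rho_0$ the hyperbolic holonomy of $\Orb^3$, I define $J\subset[0,1]$ to be the set of $t$ such that $\rho_s$ is the holonomy of a properly convex projective 3-orbifold with ends of the claimed type for every $s\in[0,t]$. Since $0\in J$, it suffices to show that $J$ is both open and closed, so that $J=[0,1]$.

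Openness of $J$ is essentially Theorem \ref{thm}, applied at each $t\in J$ rather than only at the hyperbolic point: the holonomy principle of Cooper-Long-Tillmann applies equally to a properly convex structure whose ends are hyperbolic cusps or totally geodesic generalized cusps, Theorem \ref{main} controls nearby deformations of each flexible cusp, and Porti's turnover rigidity handles the funnels. The content is that ``properly convex with the allowed end types'' is open in the holonomy representation variety.

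The main obstacle is closedness of $J$, and this is where the global description of the character variety (Corollary \ref{generalization}) becomes indispensable. For each Euclidean-turnover cusp $C\cong[0,\infty)\times S^2(3,3,3)$, restriction gives a continuous path $\chi_t:=[\rho_t|_\Gamma]\in X(\Gamma,\Slr)$ starting at $\chp\in S_1$. For every $t\in J$ the representation $\rho_t$ is discrete and faithful (as the holonomy of a properly convex orbifold), so $\chi_t$ cannot meet any of the $19$ isolated points of $X(\Gamma,\Slr)\setminus X_0$, since each of those is represented only by unfaithful representations by Proposition \ref{4.9}. Together with the disjoint decomposition $X_0=S_1\sqcup S_2$, this forces $\chi_t\in S_1$ throughout $J$; by continuity the limit character at any accumulation point of $J$ lies in $\overline{S_1}$, and since the $19$ exceptional points are isolated in $X(\Gamma,\Slr)$ they cannot be accumulation points of $S_1$, so the limit character still lies in $S_1$. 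Every character in $S_1$ is either $\chp$ (producing a hyperbolic cusp) or has diagonalizable restriction to $\Gamma_0$ (producing a totally geodesic generalized cusp of $C_3$-type, by the argument in the proof of Theorem \ref{thm}). For funnels, Porti's rigidity applies pointwise along the path. Proper convexity at the limit then follows from the Cooper-Long-Tillmann holonomy principle applied to the convergent sequence, with end types controlled as above, so $J$ is closed.

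The trickiest step I anticipate is ensuring that no limit character escapes from $S_1$: this relies on both the unfaithfulness of the $19$ isolated characters (incompatible with a discrete faithful holonomy) and the fact that $S_1$ is clopen within the faithful locus of $X_0$. Once this is in hand, the global statement follows by purely topological bookkeeping from the local Theorem \ref{thm}.
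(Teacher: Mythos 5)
Your open-closed argument on the interval $[0,1]$ is the right skeleton, and your analysis of why the restricted characters stay in $S_1$ (discreteness/faithfulness excludes the 19 isolated points, $X_0 = S_1 \sqcup S_2$ is a disjoint decomposition, and the isolated points cannot accumulate on $S_1$) correctly mirrors the role played by Corollary~\ref{generalization} and Proposition~\ref{4.9}. But the closedness step as you have written it does not actually close the argument.

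For closedness you write that ``proper convexity at the limit then follows from the Cooper-Long-Tillmann holonomy principle applied to the convergent sequence.'' This is not what that result says. Theorem 0.2 of Cooper-Long-Tillmann \cite{MR3780436} is an \emph{openness} statement: near a properly convex structure with generalized-cusp ends, nearby holonomies of generalized cusps still give properly convex structures. It does not assert that a limit of properly convex structures is properly convex; that is the hard direction, and it is precisely what is supplied by the separate result Theorem 0.2 of Cooper-Tillmann \cite{cooper2020spaceproperlyconvexstructures}, which shows that the set of properly convex structures (among structures whose ends are generalized cusps) is \emph{also closed}, hence a connected component of the relevant moduli space. The paper's proof explicitly flags this: the corollary ``needs a more general holonomy principle than the one we invoked while proving Theorem~\ref{thm}.'' Without citing the Cooper-Tillmann closedness theorem, your argument establishes only that the limiting holonomy lies in $S_1$ and that its cusp ends have the allowed algebraic type; it does not establish that the limiting projective structure remains properly convex. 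A secondary, smaller issue: your openness claim invokes ``Theorem~\ref{main}, applied at each $t\in J$,'' but that theorem constructs a slice specifically at $\rhp$; what applies at a general $t\in J$ is the Cooper-Long-Tillmann openness principle at the properly convex structure $\rho_t$, together with the classification of generalized cusps, not the slice itself. Patch the closedness step by replacing the Cooper-Long-Tillmann citation with Cooper-Tillmann (and noting why its hypotheses hold here, namely that all the end holonomies in $S_1$ are genuinely holonomies of generalized cusps), and the proof goes through.
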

\begin{proof}
    This corollary needs a more general holonomy principle that the one we invoked while proving Theorem \ref{thm}, which stated that, in the setting of finite volume convex projective orbifolds whose ends are generalized cusps, the set of properly convex projective structures is open. We refer to Theorem 0.2 in Cooper-Tillmann \cite{cooper2020spaceproperlyconvexstructures} to see that this set is also closed, so a connected component of the character variety. Notice that the reason why the theorem of Cooper-Tillmann applies is that we already known that all the representations in $\Rep(\Gamma,\Slr)$ are geometric (i.e., holonomies of convex projective structures).
\end{proof}

\begin{thebibliography}{10}
		
		\bibitem{MR4027594}
		Miguel Acosta.
		\newblock Character varieties for real forms.
		\newblock {\em Geom. Dedicata}, 203:257--277, 2019.
		
		\bibitem{MR4125754}
		Samuel~A. Ballas, Daryl Cooper, and Arielle Leitner.
		\newblock Generalized cusps in real projective manifolds: classification.
		\newblock {\em J. Topol.}, 13(4):1455--1496, 2020.
		
		\bibitem{MR3780442}
		Samuel~A. Ballas, Jeffrey Danciger, and Gye-Seon Lee.
		\newblock Convex projective structures on nonhyperbolic three-manifolds.
		\newblock {\em Geom. Topol.}, 22(3):1593--1646, 2018.
		
		\bibitem{MR4420784}
		Christoph B\"ohm and Ramiro~A. Lafuente.
		\newblock Real geometric invariant theory.
		\newblock In {\em Differential geometry in the large}, volume 463 of {\em
			London Math. Soc. Lecture Note Ser.}, pages 11--49. Cambridge Univ. Press,
		Cambridge, 2021.
		
		\bibitem{MR2043960}
		Suhyoung Choi.
		\newblock Geometric structures on orbifolds and holonomy representations.
		\newblock {\em Geom. Dedicata}, 104:161--199, 2004.
		
		\bibitem{MR3336086}
		D.~Cooper, D.~D. Long, and S.~Tillmann.
		\newblock On convex projective manifolds and cusps.
		\newblock {\em Adv. Math.}, 277:181--251, 2015.
		
		\bibitem{MR2264468}
		Daryl Cooper, Darren Long, and Morwen Thistlethwaite.
		\newblock Computing varieties of representations of hyperbolic 3-manifolds into
		{${\rm SL}(4,\Bbb R)$}.
		\newblock {\em Experiment. Math.}, 15(3):291--305, 2006.
		
		\bibitem{MR3780436}
		Daryl Cooper, Darren Long, and Stephan Tillmann.
		\newblock Deforming convex projective manifolds.
		\newblock {\em Geom. Topol.}, 22(3):1349--1404, 2018.
		
		\bibitem{cooper2020spaceproperlyconvexstructures}
		Daryl Cooper and Stephan Tillmann.
		\newblock The space of properly-convex structures.
		\newblock arXiv:2009.06568, 2020.
		
		\bibitem{MR1053346}
		William~M. Goldman.
		\newblock Convex real projective structures on compact surfaces.
		\newblock {\em J. Differential Geom.}, 31(3):791--845, 1990.
		
		\bibitem{MR4500072}
		William~M. Goldman.
		\newblock {\em Geometric structures on manifolds}, volume 227 of {\em Graduate
			Studies in Mathematics}.
		\newblock American Mathematical Society, Providence, RI, [2022] \copyright
		2022.
		
		\bibitem{MR2329569}
		Sean Lawton.
		\newblock Generators, relations and symmetries in pairs of {$3\times 3$}
		unimodular matrices.
		\newblock {\em J. Algebra}, 313(2):782--801, 2007.
		
		\bibitem{MR818915}
		Alexander Lubotzky and Andy~R. Magid.
		\newblock Varieties of representations of finitely generated groups.
		\newblock {\em Mem. Amer. Math. Soc.}, 58(336):xi+117, 1985.
		
		\bibitem{loh}
		Clara Löh.
		\newblock Group cohomology.
		\newblock
		\url{https://loeh.app.uni-regensburg.de/teaching/grouphom_ss19/lecture_notes.pdf},
		2019.
		
		\bibitem{MR4495671}
		Joan Porti.
		\newblock Dimension of representation and character varieties for two- and
		three-orbifolds.
		\newblock {\em Algebr. Geom. Topol.}, 22(4):1905--1967, 2022.
		
		\bibitem{porti}
		Joan Porti.
		\newblock Deforming reducible representations of surface and 2-orbifold groups.
		\newblock arXiv:2309.00282, 2023.
		
		\bibitem{MR4264240}
		Joan Porti and Stephan Tillmann.
		\newblock Projective structures on a hyperbolic 3-orbifold.
		\newblock {\em Acta Math. Vietnam.}, 46(2):347--355, 2021.
		
		\bibitem{MR1087217}
		R.~W. Richardson and P.~J. Slodowy.
		\newblock Minimum vectors for real reductive algebraic groups.
		\newblock {\em J. London Math. Soc. (2)}, 42(3):409--429, 1990.
		
		\bibitem{MR2931326}
		Adam~S. Sikora.
		\newblock Character varieties.
		\newblock {\em Trans. Amer. Math. Soc.}, 364(10):5173--5208, 2012.
		
		\bibitem{MR169956}
		Andr\'e Weil.
		\newblock Remarks on the cohomology of groups.
		\newblock {\em Ann. of Math. (2)}, 80:149--157, 1964.
			
\end{thebibliography}
	
\begin{small}
\noindent \textsc{Departament de Matem\`atiques,  Universitat Aut\`onoma de Barcelona, and
\\
Centre de Recerca Matem\`atica (UAB-CRM)
08193 Cerdanyola del Vall\`es, Spain }

\noindent \textsf{alejandro.garcia.sanchez@uab.cat}

\noindent \textsf{joan.porti@uab.cat}
\end{small}

\end{document}